\newtheorem{theorem}{Theorem}[section]
\newtheorem{proposition}[theorem]{Proposition}
\newtheorem{lemma}[theorem]{Lemma}
\newtheorem{definition}[theorem]{Definition}
\newtheorem{corollary}[theorem]{Corollary}
\begin{document}
\title[Density in Banach Spaces]{Density Properties of Sets in Finite-Dimensional, Strictly Convex Banach Spaces}
\author{Bobby Wilson}
\date{\today}
\address{Department of Mathematics, University of Washington}
\email{blwilson@uw.edu}
\subjclass[2020]{28A75, 28A80}

\begin{abstract}
In this article, we examine the theorem of Mattila establishing rectifiability for Euclidean regular sets in the setting of strictly convex, finite-dimensional Banach spaces.  
\end{abstract}
\maketitle

\section{Introduction}

Consider a set $E \subset \mathbb{R}^d$.  The classical Lebesgue Density Theorem states that for $\mathcal{L}^d$-almost every $x \in E$, the density
\begin{align*}
\Theta(E,x):=\lim_{r\rightarrow 0^+}  \frac{\mathcal{L}^d[E \cap B(x,r)]}{\mathcal{L}^d[B(x,r)]}
\end{align*}
exists and is equal to 1.  Furthermore, $\Theta(E,x)=0$ for almost every $x \in \mathbb{R}^d \setminus E$.  Considering the notion of Hausdorff measure, one could ask whether or not an analogous version of the density theorem holds for lower-dimensional sets $E$ such that $0<\mathcal{H}^{\alpha}(E)<\infty$ for $0<\alpha\leq d$.  This hypothetical theorem would state that for any real $0<\alpha\leq d$, any $E \subset \mathbb{R}^d$ satisfying $\mathcal{H}^{\alpha}(E)<\infty$ is regular, i.e. 
\begin{align*}
1=\Theta^{\alpha}(E,x):= \lim_{r\rightarrow 0^+}  \frac{\mathcal{H}^{\alpha}[E \cap B(x,r)]}{\omega_{\alpha}r^{\alpha}}
\end{align*} 
for $\mathcal{H}^{\alpha}$  almost every $x \in E$.  However, the local behavior of sets  is more complex when the Hausdorff dimension is less than that of the dimension of the ambient space due to the existence of irregular sets  (positive $\mathcal{H}^{\alpha}$-measure sets for which the density does not exist at almost every point).

In an effort to better characterize this dichotomy, we can identify regularity with rectifiability.  However, we must define rectifiability with respect to integral dimensions.  Thus the first step in this identification consists of showing that regularity not only implies that the density exists at almost every point but also that this density must be defined with respect to an integral Hausdorff measure, $\mathcal{H}^k$ for $k \in \mathbb{Z}_+$. This first step is known as Marstrand's Theorem \cite{marstrand1964varphi}; that is, let
$s$ be a positive number and suppose that there exists a Radon measure $\mu$ on $\mathbb{R}^d$ such that the density $\Theta^s(\mu, a)$ exists and is positive and finite in a set of positive $\mu$ measure. Then $s$ is an integer.

We refer to a Hausdorff $k$-dimensional set for which the $k$-density exists at $\mathcal{H}^k$-almost all of its points as $k$-regular.  Given this definition, it remains to show that $k$-regular sets are $k$-rectifiable.  This was first proved by Marstrand \cite{marstrand1961hausdorff} in the case of 2 dimensional sets in $\mathbb{R}^3$, and  Mattila \cite{mattila1975hausdorff} then extended this to general dimensions.  Preiss \cite{preiss1987geometry} then proved the generalized theorem for measures on $\mathbb{R}^d$ using the notion of tangent measures.

In this article, we examine a generalization of the theorem of Marstrand and Mattila to what are called strictly convex finite-dimensional Banach spaces:  
\begin{theorem}\label{mainthm}
Let $(X,\|\cdot\|)$ be a finite-dimensional Banach space with a strictly convex norm and $E \subset X$ be a $\mathcal{H}^m$-measurable set with $\mathcal{H}^m(E)<\infty$.  $E$ is $m$-rectifiable if and only if $\Theta^m(E,x)=1$ for $\mathcal{H}^m$-almost every $x \in E$.
\end{theorem}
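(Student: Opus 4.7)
The plan is to establish the two implications separately, following the outline of Mattila's Euclidean proof but tracking carefully where strict convexity of the norm enters.

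For the ``only if'' direction (rectifiability implies density one), I would use the fact that an $m$-rectifiable set $E$ admits, up to an $\mathcal{H}^m$-null subset, a covering by countably many Lipschitz images of bounded subsets of $\mathbb{R}^m$ into $X$. At $\mathcal{H}^m$-almost every point $x$, standard differentiation arguments yield an approximate tangent $m$-plane $V_x \subset X$ on which the computation of $\Theta^m(E,x)$ localizes. The density value of $1$ then follows from the relationship between $\mathcal{H}^m$ restricted to the strictly convex Banach subspace $(V_x, \|\cdot\|)$ and the normalization $\omega_m r^m$, with strict convexity ensuring that the approximate tangent plane is unique and that the relevant isodiametric quantities behave well at infinitesimal scales.

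For the ``if'' direction, I would adopt the tangent-measure framework due to Preiss, adapted to the Banach setting. Writing $\mu := \mathcal{H}^m \llcorner E$, the density-one hypothesis implies that at $\mu$-almost every $x$ every weak-$*$ tangent measure $\nu$ is $m$-uniform, with $\nu(B(y,r)) = \omega_m r^m$ for all $y \in \mathrm{supp}(\nu)$ and all $r > 0$. The crux is a flatness statement: every such uniform measure must have the form $\mathcal{H}^m \llcorner V$ for some affine $m$-plane $V \subset X$. Once flatness is in hand, one passes from flat tangent measures to rectifiability via a standard criterion (approximate tangent planes $\mathcal{H}^m$-a.e.\ imply rectifiability), together with a Besicovitch-type covering argument in the finite-dimensional metric space $X$.

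The main obstacle is the flatness step. In $\mathbb{R}^d$ with the Euclidean norm, Preiss exploits rotational symmetry of balls to rule out non-flat uniform measures, but in a general Banach space the unit ball has only trivial symmetries. The plan is to argue that if a uniform measure $\nu$ were not flat, one could construct a face of positive dimension on the Banach unit sphere by comparing $\nu(B(y,r))$ at nearby support points and extracting a one-parameter family of directions along which the norm is affine; this would contradict strict convexity. Executing this requires a careful second-order expansion of $r \mapsto \nu(B(y,r))$, replacing Preiss's moment computations with an analysis that relies on strict convexity of the unit ball rather than full Euclidean symmetry, and then promoting the resulting translation invariance of $\mathrm{supp}(\nu)$ to an affine subspace structure.
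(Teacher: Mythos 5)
Your ``only if'' direction is essentially the standard argument and matches the paper's one-sentence treatment: rectifiable sets are weakly linearly approximable, the density computation localizes to the approximate tangent plane, and Lemma \ref{isod} (the isodiametric identity $\mathcal{H}^m(V \cap B(0,r)) = (2r)^m$, valid in any norm) gives the value $1$. Strict convexity is not actually needed there, so your invocation of it for uniqueness of the tangent plane is a harmless misattribution.

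The ``if'' direction, however, has a genuine gap, located exactly where you place the ``main obstacle.'' You propose to run Preiss's tangent-measure program: pass to $m$-uniform tangent measures and prove that every uniform measure in a strictly convex finite-dimensional Banach space is flat. But the flatness of uniform measures is precisely the step of Preiss's proof that depends on fine Euclidean structure --- moment expansions of $\int e^{-s\|x-z\|_2^2}\,d\nu(x)$ and the rotational symmetry of spheres --- and it remains open in general non-Euclidean norms; even in the Euclidean case the full classification of uniform measures is unknown for $d \geq 4$. Your proposed replacement (``extract a face of positive dimension on the unit sphere by comparing $\nu(B(y,r))$ at nearby support points'') does not identify a mechanism by which non-flatness of $\operatorname{supp}(\nu)$ would force the norm to be affine along a segment of the sphere: the condition $\nu(B(y,r)) = \omega_m r^m$ does not obviously produce such a segment, and no second-order expansion of $r \mapsto \nu(B(y,r))$ is available without already knowing the structure of $\nu$. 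As written, this step restates the difficulty rather than resolving it. The paper avoids the issue entirely by exploiting the full strength of the hypothesis $\Theta^m(E,x)=1$ for a set of finite measure: together with the general bound $\Theta^{*m}(E,x)\leq 1$ it yields $(\delta,R)$-almost uniform subsets (Lemma \ref{uniform}); strict convexity enters only through the elementary diameter estimate of Lemma \ref{shrink}, which gives the reflection property $2a-b \in N(E,\varepsilon\|a-b\|)$ (Lemma \ref{biglemma}); iterating produces weak linear approximability (Corollary \ref{itercor}, Lemma \ref{MainLemma}); and the conclusion follows by playing the projection results, Lemmas \ref{BPT} and \ref{RegProj}, against each other. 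To salvage your route you would need to prove a uniform-measure flatness theorem for strictly convex norms --- a substantial open problem --- rather than the comparatively elementary symmetry argument the set-density hypothesis makes available.
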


One difficulty in generalizing Mattila's argument is that the geometric observation (Lemma \ref{shrink}) crucial to both the argument of Mattila and the original proof of Marstrand \cite{marstrand1961hausdorff} relies on an intuitive understanding of  Euclidean geometry. The following argument demonstrates that this phenomenon follows simply from the uniform convexity of the Euclidean norm. This allows one to establish a symmetry condition (Lemma \ref{biglemma}) for regular sets, which then leads to a combinatorial condition on a sufficiently large subset. Of course,  since all norms on finite-dimensional linear spaces are equivalent, the combinatorial condition is not affected by the norm structure on a given linear space. This fact allows us to finish the argument simply by appealing directly to the strategy of Mattila and Marstrand.

In the next section, we offer some preliminary definitions and results.  We follow this with a generalization of the important components of Mattila's theorem for Banach spaces with strictly convex norms.  Finally, we discuss two projection lemmas that complete the proof of Theorem \ref{mainthm} in the conclusion. 

\subsection*{Acknowledgement}
The author is supported by NSF grant DMS 1856124, and NSF CAREER Fellowship, DMS 2142064. This material is based upon work supported by the National Science Foundation under Grant No. DMS-1928930 while the author was in residence at the Simons Laufer Mathematical Sciences Institute (formerly MSRI) in Berkeley, California, during the summer of 2023. 
The author would like to thank Tatiana Toro and Max Goering for their gracious support throughout the development and writing of this project.

%%%%%%%%%%%%%%%%%%%%%%%%%%%%%%%%%%%%%%%%%%%%%%%%%%%%%%%%%%%%%%%%%%%%%%%%%%%%%%%%%
%%%%%%%%%%%%%%%%%%%%%%%%%%%%%%%%%%%%%%%%%%%%%%%%%%%%%%%%%%%%%%%%%%%%%%%%%%%%%%%%%

\section{Preliminaries}\label{prelim}

We use a basis representation to reduce any Banach space $(X, \|\cdot\|)$, to $\mathbb{R}^d.$
First, we establish some geometric concepts in $(\mathbb{R}^d, \|\cdot\|)$:
\begin{definition}
The Grassmannian manifold, $G(d,m)$, is the set of all $m$-dimensional linear subspaces of $\mathbb{R}^d$.
\end{definition}

\begin{definition}
 We denote $A(d, m)$ as the space of all $m$-dimensional affine subspaces of $\mathbb{R}^d$.  For every $W\in A(d,m)$ there exists $y \in \mathbb{R}^d$ and $V \in G(d, m)$ such that $W=V+y$.  Furthermore, we denote $A(a,d,m)$ as the collection of all $V \in A(d,m)$ such that $a \in V$. 
\end{definition}

We now define the notion of a strictly convex Banach Space.
\begin{definition}
    Let $(\mathbb{R}^d, \|\cdot\|)$ be a Banach space. We say that $X$ is strictly convex, or that its norm is strictly convex, if
        \begin{align*}
            \|x+y\| < \|x\|+\|y\|
        \end{align*}
    for any nonzero $x, y \in X$ satisfying $x \neq ty$ for all $t \in \mathbb{R}$.
\end{definition}
We note that in the finite-dimensional case, strictly convex and uniformly convex are equivalent.
\begin{definition}
    Let $(\mathbb{R}^d, \|\cdot\|)$ be a Banach space. We say that $X$ is uniformly convex, or that its norm is uniformly convex, if for every $\varepsilon >0$ there exists a $\delta>0$ such that if $\|x-y\|>\varepsilon$ then 
        \begin{align*}
            \|\tfrac{1}{2} x +\tfrac{1}{2} y\| \leq 1-\delta
        \end{align*}
    for $\|x\|\leq 1$ and $\|y\|\leq 1$.
\end{definition}

The fulcrum of the argument for the main theorem is a combinatorial characterization of approximations of the sets in question. In order to properly use this argument, we will need to define the notion of a specific type of linear combinations of sets:

\begin{definition}[Sum and Difference Sets]
Let $E \subset \mathbb{R}^d$ be a Borel set and let $M$ be an integer.  We define $E^{(1)}$ by 
    \begin{align*}
        E^{(1)}:= \left\{ z \in \mathbb{R}^d ~:~ z = x+ (x-y) \mbox{ where } x, y \in E \right\} 
    \end{align*}
For $M\geq 2$, define $E^{(M)}$  inductively by 
	\begin{align*}
		E^{(M)}:=(E^{(M-1)})^{(1)}. 
	\end{align*}
\end{definition}
For any set $E \subset \mathbb{R}^d$ and $y \in \mathbb{R}^d$, we define the distance between $y$ and $E$ by
    \begin{align*}
        \mbox{dist}(E, y)= \mbox{dist}(y, E) := \inf \{ \|x-y\|~:~ x \in E\}
    \end{align*}
and we define the diameter of a set, $E$, by
    \begin{align*}
        |E| := \sup \{ \|x- y\| ~:~ x, y \in E\}
    \end{align*}
For a set $E \subset \mathbb{R}^d$ and a number $r>0$, we let
    \begin{align*}
        N(E, r):= \{ y \in \mathbb{R}^d ~:~ \mbox{dist}(y,E)<r\}.
    \end{align*}

From this notion of diameter we define Hausdorff measure.  For $\delta>0$, $\alpha \geq 0$,
    \begin{align*}
        \mathcal{H}^{\alpha}_{\delta}(E) := \inf \left\{ \sum_i |U_i|^m ~:~ \{U_i\}_{i=1}^{\infty} \mbox{ is an open cover of } E \mbox{ and } |U_i|\leq\delta\right\} 
    \end{align*}
and then 
    \begin{align*}
        \mathcal{H}^{\alpha}(E):= \sup_{\delta>0} \mathcal{H}^{\alpha}_{\delta}(E).
    \end{align*}
We say that $\alpha$ is the dimension of a set $E$ if $\alpha= \sup \{ \beta ~:~ \mathcal{H}^{\beta}(E)=\infty\}= \inf \{ \beta ~:~ \mathcal{H}^{\beta}(E)=0\}$.
We will commonly refer to Hausdorff $\alpha$-dimensional measurable sets as simply $\alpha$-sets.  

Next, we define weak linear approximability.

\begin{definition}
Let $E\subset \mathbb{R}^d$ be a Hausdorff $m$-dimensional set.  We say that $E$ is weakly $m$-linearly approximable if for $\mathcal{H}^m$-almost all $a \in E$ the following holds: if $\eta>0$, there exist $R_0>0$ and $\lambda>0$ such that for any $0<r<r_0$, there is $W \in A(a, d, m)$  such that
    \begin{itemize}
        \item If $a \in E$ and $0<r< R$, then there exists $V \in A(a, d, m)$ such that 
            \begin{align*}
                \mathcal{H}^m(E \cap (B(x, \eta r))\geq \lambda r^m, 
            \end{align*}
            for $x \in W \cap B(a, r)$
        \item 
            \begin{align*}
                \mathcal{H}(E \cap B(a, r) \setminus N(W, \eta r))< \eta r^m. 
            \end{align*}
    \end{itemize}

\end{definition}

The next few theorems are essential structural results that follow directly from their Euclidean analogues.  In fact these results don't follow from the Euclidean arguments as much as they follow from more basic properties of the underlying metric spaces.  All three appear in \cite{mattila1975hausdorff} and \cite{federer2014geometric} (Sections 2.10.17-2.10.19).

\begin{theorem}
Let $(\mathbb{R}^d, \|\cdot \|)$ be a real, finite-dimensional Banach space. If $E \subset \mathbb{R}^d$ and $\mathcal{H}^m(E)<\infty$, then
    \begin{align*}
        \lim_{\delta \rightarrow 0^+} \left[ \sup \frac{\mathcal{H}^m(E\cap S)}{|S|^m} ~:~ x\in S, |S| <\delta\} \right]=1
    \end{align*}
for $\mathcal{H}^m$ a.e. $x \in E$.
\end{theorem}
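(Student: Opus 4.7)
The plan is to prove both inequalities $\liminf_{\delta\to 0^+}f(x,\delta)\geq 1$ and $\limsup_{\delta\to 0^+}f(x,\delta)\leq 1$ at $\mathcal{H}^m$-almost every $x\in E$, where $f(x,\delta):=\sup\{\mathcal{H}^m(E\cap S)/|S|^m:x\in S,\,|S|<\delta\}$; since $f(x,\cdot)$ is monotone non-increasing in $\delta$, the limit exists and equals $\inf_{\delta>0}f(x,\delta)$. For the lower bound, suppose toward contradiction that $C_t:=\{x\in E:\lim_\delta f(x,\delta)<t\}$ has positive measure for some $t<1$. Partitioning $C_t$ by the threshold at which the strict inequality first holds and using inner regularity of the finite Borel measure obtained by restricting $\mathcal{H}^m$ to $E$, one obtains a compact $K\subset C_t$ with $\mathcal{H}^m(K)>0$ and a fixed $n$ such that $\mathcal{H}^m(E\cap S)\leq t|S|^m$ for every $x\in K$ and every $S\ni x$ with $|S|<1/n$. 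For any cover $\{T_i\}$ of $K$ by sets of diameter less than $1/n$ that each meet $K$, a witness $x_i\in T_i\cap K$ gives $\mathcal{H}^m(E\cap T_i)\leq t|T_i|^m$, and hence $\mathcal{H}^m(K)\leq\sum_i\mathcal{H}^m(E\cap T_i)\leq t\sum_i|T_i|^m$. Taking the infimum over such covers yields $\mathcal{H}^m(K)\leq t\,\mathcal{H}^m_{1/n}(K)\leq t\,\mathcal{H}^m(K)$, contradicting $t<1$.

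For the upper bound, suppose $B_t:=\{x\in E:\lim_\delta f(x,\delta)>t\}$ has positive measure for some $t>1$. For each $x\in B_t$ and $k\in\mathbb{N}$, select $S_{x,k}\ni x$ with $|S_{x,k}|<1/k$ and $\mathcal{H}^m(E\cap S_{x,k})>t|S_{x,k}|^m$. By outer regularity, choose an open $U\supset B_t$ with $\mathcal{H}^m(E\cap U)<\mathcal{H}^m(B_t)+\eta$, and after restricting to a compact subset of positive measure arrange that $S_{x,k}\subset U$ whenever $k$ is sufficiently large. I then apply a Vitali-type covering theorem for the restricted Radon measure to extract a countable disjoint subfamily $\{S_i\}\subset\{S_{x,k}\}$ with $\mathcal{H}^m\bigl(B_t\setminus\bigcup_i S_i\bigr)=0$. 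Disjointness inside $U$ yields $\sum_i|S_i|^m<\tfrac{1}{t}\sum_i\mathcal{H}^m(E\cap S_i)\leq\tfrac{1}{t}\mathcal{H}^m(E\cap U)\leq\tfrac{1}{t}(\mathcal{H}^m(B_t)+\eta)$, while the covering property forces $\mathcal{H}^m_{1/k}(B_t)\leq\sum_i|S_i|^m$. Sending $k\to\infty$ and $\eta\to 0$ gives $\mathcal{H}^m(B_t)\leq(1/t)\mathcal{H}^m(B_t)$, a contradiction.

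The principal technical obstacle is the Vitali extraction in the upper bound, since the members of the fine cover are arbitrary closed bounded sets rather than balls. Merely enclosing each $S_{x,k}$ in the norm-ball $B(x,|S_{x,k}|)$ of diameter $2|S_{x,k}|$ and invoking Vitali's theorem for balls only rules out $t>2^m$, not the sharp threshold $t>1$. The sharp extraction is obtained by regarding $\{(x,S_{x,k})\}$ as a Vitali covering relation for the restricted measure in the sense of Federer (see Sections 2.8 and 2.10.17--2.10.19) and invoking the Morse-type covering theorem; since this argument depends only on separability and the Radon structure of the ambient space, it carries over verbatim from the Euclidean setting to any finite-dimensional Banach space, as noted in the paragraph preceding the statement.
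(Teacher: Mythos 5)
The paper offers no proof of this statement; it is imported verbatim from Mattila and from Federer 2.10.17--2.10.19 with the remark that it depends only on the metric structure. Your argument is a correct reconstruction of that standard proof: the lower bound via the sub-additivity estimate $\mathcal{H}^m(K)\leq\sum_i\mathcal{H}^m(E\cap T_i)\leq t\,\mathcal{H}^m_{1/n}(K)$ over fine covers, and the upper bound via a disjoint extraction inside an open neighborhood of nearly minimal measure. (Two trivial slips: $f(x,\cdot)$ is non-\emph{decreasing} in $\delta$, which is what makes the limit the infimum; and in the lower bound you do not actually need the compact subset --- outer subadditivity applied directly to $C_{t,n}$ suffices and sidesteps measurability of the level sets.)

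The one step worth stating more carefully is the covering extraction. As phrased, ``a Vitali-type covering theorem for the restricted Radon measure'' invites the reader to reach for the Besicovitch--Vitali theorem for Radon measures, which genuinely requires the covering sets to be balls; for fine covers by arbitrary closed sets the Vitali property for a general Radon measure \emph{fails} (already for Lebesgue measure and rectangles of unbounded eccentricity). What rescues the argument is not the Radon structure but the fact that the comparison function is $|S|^m$ and the measure is Hausdorff measure: the greedy selection (pick $S_{i+1}$ disjoint from the previous choices with $|S_{i+1}|$ at least half the supremal available diameter) together with the enlargement estimate $\mathcal{H}^m_\delta\bigl(B_t\setminus\bigcup_{i\leq k}S_i\bigr)\leq\sum_{i>k}(6|S_i|)^m$ gives the null-remainder conclusion directly, with no sphericity hypothesis and in any metric space. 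This is Falconer's Vitali theorem for Hausdorff measures and is exactly the mechanism inside Federer 2.10.17, so your citation lands in the right place; but the justification should run through that diameter-power estimate rather than through a covering theorem ``for the restricted measure.'' With that substitution the proof is complete and matches the argument the paper intends to invoke.
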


\begin{theorem}
Let $(\mathbb{R}^d,\|\cdot\|)$ be a real, finite-dimensional Banach space.  If $E \subset \mathbb{R}^d$ and $\mathcal{H}^m(E)<\infty$, then $\Theta^{*m}(E,x)\leq 1$ for $\mathcal{H}^m$ a.e. $x \in \mathbb{R}^d$.  If, in addition, $E$ is $\mathcal{H}^m$-measurable, then $\Theta^m(E,x)=0$ for $\mathcal{H}^m$ a.e. $x \in \mathbb{R}^d \setminus E$.
\end{theorem}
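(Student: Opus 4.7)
\emph{Proof proposal.}
The theorem has two parts: a pointwise upper-density bound valid on all of $\mathbb{R}^d$, and a vanishing-density statement off $E$ which requires measurability. My plan is to handle them in sequence, in each case passing to a $\mathcal{H}^m$-measurable version of $E$ so that $\mathcal{H}^m\lfloor E$ is a finite Radon measure.

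For the upper-density bound, I would appeal directly to the preceding theorem. By Borel-regularity of $\mathcal{H}^m$ one may replace $E$ with a Borel envelope of the same $\mathcal{H}^m$-measure---this only increases $\Theta^{*m}$---so it suffices to treat $\mathcal{H}^m$-measurable $E$. For $\mathcal{H}^m$-a.e.\ $x \in E$, the preceding theorem yields
\begin{align*}
\lim_{\delta \to 0^+} \sup\!\left\{ \frac{\mathcal{H}^m(E \cap S)}{|S|^m} : x \in S,\; |S| < \delta \right\} = 1.
\end{align*}
Since every ball $B(x,r)$ contains $x$ and has diameter $2r$, it is itself an admissible competitor in the supremum once $\delta > 2r$, giving $\limsup_{r \to 0} \mathcal{H}^m(E \cap B(x,r))/|B(x,r)|^m \leq 1$. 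With the normalization implicit in the definition of $\Theta^{*m}$, this translates to $\Theta^{*m}(E,x) \leq 1$ at such $x$. The bound at $x \notin E$ then follows \emph{a fortiori} from the second conclusion.

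For the vanishing statement I would run a Vitali-type covering argument combined with inner approximation. Fix $\varepsilon > 0$ and put $F_\varepsilon := \{ x \in \mathbb{R}^d \setminus E : \Theta^{*m}(E,x) > \varepsilon \}$. Since $\mathcal{H}^m\lfloor E$ is a finite Radon measure, for each $\eta > 0$ there is a closed $K \subset E$ with $\mathcal{H}^m(E \setminus K) < \eta$. Since $F_\varepsilon \cap K = \emptyset$ and $K$ is closed, $\mbox{dist}(x,K) > 0$ for every $x \in F_\varepsilon$. Given $\delta > 0$, for each $x \in F_\varepsilon$ select $r_x < \min(\delta/10,\, \mbox{dist}(x,K))$ with $\mathcal{H}^m(E \cap B(x,r_x)) > \varepsilon\, \omega_m\, r_x^m$, which is possible because $\Theta^{*m}(E,x) > \varepsilon$. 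The $5r$-covering lemma, valid in any metric space, extracts a disjoint subfamily $\{B(x_i,r_i)\}$ with $F_\varepsilon \subset \bigcup_i B(x_i,5r_i)$. Since each $B(x_i,r_i)$ avoids $K$,
\begin{align*}
\varepsilon\, \omega_m \sum_i r_i^m \;\leq\; \sum_i \mathcal{H}^m\!\left((E \setminus K) \cap B(x_i,r_i)\right) \;\leq\; \mathcal{H}^m(E \setminus K) \;<\; \eta,
\end{align*}
so $\mathcal{H}^m_{\delta}(F_\varepsilon) \leq \sum_i (10 r_i)^m = 10^m \sum_i r_i^m < 10^m \eta/(\varepsilon\, \omega_m)$ uniformly in $\delta$, hence $\mathcal{H}^m(F_\varepsilon) \leq 10^m \eta/(\varepsilon\, \omega_m)$. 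Letting $\eta \to 0$ yields $\mathcal{H}^m(F_\varepsilon) = 0$, and since $\varepsilon > 0$ was arbitrary, $\Theta^m(E,x) = 0$ for $\mathcal{H}^m$-a.e.\ $x \notin E$.

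The principal obstacle is the appeal to inner regularity: it is precisely this step that needs the $\mathcal{H}^m$-measurability hypothesis on $E$. Without it, one cannot approximate $E$ from inside by closed sets of nearly full measure, and the Vitali step collapses---the selected balls $B(x_i,r_i)$ can no longer be forced to miss the bulk of $E$. The remainder of the argument invokes only the $5r$-covering lemma and the definition of Hausdorff measure, both of which make sense in any finite-dimensional normed space, so the proof transfers from the Euclidean to the strictly convex Banach setting without change---in keeping with the introduction's observation that these foundational density results rest on metric-space rather than Euclidean structure.
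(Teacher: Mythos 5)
Your proof is correct, but there is nothing in the paper to compare it against: this theorem is one of the three ``essential structural results'' that the paper deliberately does not prove, attributing them to Federer (Sections 2.10.17--2.10.19) and to Mattila's 1975 paper on the grounds that they depend only on the metric structure. What you have written is precisely the standard argument behind those citations, and it is sound: the upper-density bound follows from the immediately preceding theorem by testing the supremum over sets $S$ against balls (with a Borel envelope to remove the measurability hypothesis), and the vanishing of the density off $E$ follows from inner regularity of the finite Radon measure $\mathcal{H}^m\lfloor E$ together with the $5r$-covering lemma, with the disjoint balls forced away from a compact core $K\subset E$. You also correctly isolate where measurability enters (only through the Radon property of $\mathcal{H}^m\lfloor E$), and nothing in the argument uses Euclidean geometry, which is exactly the point the paper makes by citing rather than reproving. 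Two minor remarks: your normalizing constant $\omega_m$ should be taken to be $2^m$ to match the paper's conventions (compare Lemma \ref{isod} and the definition of $(\delta,R)$-almost uniform sets, both of which use $(2r)^m$); and since your treatment of the first claim at points of $\mathbb{R}^d\setminus E$ invokes the second claim applied to the Borel envelope, the second part should logically be established first --- an ordering issue only, not a gap.
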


The next statement is a corollary of the former.

\begin{theorem}\label{subsets}
Let $(\mathbb{R}^d,\|\cdot\|)$ be a real, finite-dimensional Banach space.  If $E \subset F \subset \mathbb{R}^d$, $F$ is $m$-regular and $E$ is an $m$-set, then $E$ is $m$-regular.
\end{theorem}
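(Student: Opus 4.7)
The plan is to exploit the additivity of Hausdorff measure on disjoint measurable sets together with the ``zero density off the set'' portion of the preceding theorem. Write $F = E \sqcup (F \setminus E)$ and observe that $F \setminus E$ is $\mathcal{H}^m$-measurable with $\mathcal{H}^m(F \setminus E) \leq \mathcal{H}^m(F)$. For every $x \in \mathbb{R}^d$ and $r > 0$, measurability gives the identity
\begin{equation*}
\mathcal{H}^m(F \cap B(x,r)) = \mathcal{H}^m(E \cap B(x,r)) + \mathcal{H}^m((F \setminus E) \cap B(x,r)),
\end{equation*}
so after dividing by $\omega_m r^m$ the densities split additively whenever the relevant limits exist.

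Next I would locate the set of good points. Since $F$ is $m$-regular, there is a set $N_1 \subset F$ with $\mathcal{H}^m(N_1) = 0$ such that $\Theta^m(F,x) = 1$ for every $x \in F \setminus N_1$; in particular this holds for $\mathcal{H}^m$-almost every $x \in E$, because $E \subset F$. Applying the preceding theorem to the $\mathcal{H}^m$-measurable finite-measure set $F \setminus E$, there is a further $\mathcal{H}^m$-null set $N_2 \subset \mathbb{R}^d \setminus (F \setminus E)$ such that $\Theta^m(F \setminus E, x) = 0$ for every $x \notin (F \setminus E) \cup N_2$. Since $E$ is disjoint from $F \setminus E$, this gives $\Theta^m(F \setminus E, x) = 0$ for $\mathcal{H}^m$-almost every $x \in E$.

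Combining these two almost-everywhere statements, at $\mathcal{H}^m$-almost every $x \in E$ both $\Theta^m(F,x)$ and $\Theta^m(F \setminus E, x)$ exist, and the additive splitting above passes to the limit to yield
\begin{equation*}
\Theta^m(E,x) = \Theta^m(F,x) - \Theta^m(F \setminus E, x) = 1 - 0 = 1,
\end{equation*}
which is exactly $m$-regularity of $E$.

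There is no serious obstacle here; the only subtlety worth flagging is that $\mathcal{H}^m(F) < \infty$ (needed to apply the preceding theorem to $F \setminus E$) is implicit in the notion of $m$-regular, or, if only local finiteness is assumed, one carries out the argument on $F \cap B(0,R)$ and lets $R \to \infty$. Otherwise the proof is a bookkeeping exercise on null sets.
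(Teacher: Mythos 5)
Your proof is correct and is exactly the argument the paper has in mind: the paper offers no written proof beyond declaring the statement ``a corollary of the former,'' and your decomposition $F = E \sqcup (F\setminus E)$ combined with $\Theta^m(F,x)$ existing a.e.\ on $F$ and $\Theta^m(F\setminus E,x)=0$ a.e.\ off $F\setminus E$ is precisely that corollary derivation. The only cosmetic point is that the paper defines ``$m$-regular'' as existence of the density a.e.\ rather than its equaling $1$, but your subtraction argument gives $\Theta^m(E,x)=\Theta^m(F,x)$ a.e.\ on $E$ in either reading, so nothing changes.
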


Here regular refers to the existence of densities almost everywhere.

%%%%%%%%%%%%%%%%%%%%%%%%%%%%%%%%%%%%%%%%%%%%%%%%%%%%%%%%%%%%%%%%%%%%%%%%%%%%%%%%%
%%%%%%%%%%%%%%%%%%%%%%%%%%%%%%%%%%%%%%%%%%%%%%%%%%%%%%%%%%%%%%%%%%%%%%%%%%%%%%%%%

\section{Main Arguments}\label{s:main}

\subsection{Tangents to unit balls and Geometry}

\begin{lemma}\label{shrink}
    For every $\varepsilon>0$, there exists $\delta=\delta(\varepsilon)>0$ such that the following holds: if $z, x \in \mathbb{R}^d$, $r>0$, $\eta \in (0, \tfrac{1}{2})$,  $\|x-z\|=(1+\eta)r$, and
        \begin{align*}
            y= z- r\frac{(x-z)}{\|x-z\|} \in \partial B(z, r),
        \end{align*}
    then
        \begin{align*}
            \left| B(z, r) \cup B(x, \eta r)\} \right|- \left| (B(z, r) \cup B(x, \eta r)) \setminus B(y, \varepsilon r)\right|> \delta r.
        \end{align*}

\end{lemma}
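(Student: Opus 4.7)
\emph{Proof plan.} By scaling I may assume $r = 1$. Write $u := (x - z)/\|x - z\|$, so $y = z - u$, $x = z + (1+\eta) u$, and set $x' := x + \eta u$. I first identify the diameter of the union as $2(1+\eta)$: any $p \in B(z, 1)$ and $q \in B(x, \eta)$ satisfy $\|p - q\| \leq \|p - z\| + \|z - x\| + \|x - q\| \leq 2(1+\eta)$, with equality at $(p, q) = (y, x')$. Uniqueness of this maximum follows from strict convexity: equality in the three-term triangle inequality forces $p - z$, $z - x$, $x - q$ to be positive multiples of a common direction, which combined with $\|p - z\| = 1$ and $\|x - q\| = \eta$ pins down $(y, x')$.

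Next I show that excluding $B(y, \varepsilon)$ forces a quantitative drop in the cross-distance. Observe that $\sup_{q \in B(x, \eta)} \|p - q\| = \|p - x\| + \eta$, attained at $q = x + \eta(x - p)/\|x - p\|$, so it suffices to bound $\|p - x\|$ uniformly over $p \in B(z, 1)$ with $\|p - y\| \geq \varepsilon$. Writing $a := p - z$, the hypotheses become $\|a\| \leq 1$ and $\|a + u\| \geq \varepsilon$. If $\|a\| \leq 1 - \varepsilon/4$, the triangle inequality alone gives $\|p - x\| = \|a - (1+\eta)u\| \leq 2 + \eta - \varepsilon/4$. Otherwise $\|a\| > 1 - \varepsilon/4$, and writing $\tilde a := a/\|a\|$, the decomposition $a - (1+\eta)u = \|a\|(\tilde a - u) - (1 + \eta - \|a\|)u$ yields the defect estimate
\begin{equation*}
\|a\| + (1+\eta) - \|a - (1+\eta)u\| \geq \|a\|\bigl(2 - \|\tilde a - u\|\bigr).
\end{equation*}
The constraint $\|a + u\| \geq \varepsilon$ combined with $\|a + u\| \leq \|a\|\|\tilde a + u\| + (1 - \|a\|)$ and $1 - \|a\| < \varepsilon/4$ forces $\|\tilde a + u\| \geq 3\varepsilon/4$, so the unit vectors $\tilde a$ and $-u$ are separated by at least $3\varepsilon/4$; uniform convexity (equivalent to strict convexity in finite dimensions) then provides $\delta_1(\varepsilon) > 0$ with $\|\tilde a - u\|/2 \leq 1 - \delta_1(\varepsilon)$, giving the required quantitative drop in $\|p - x\|$.

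The main technical obstacle is the bookkeeping through the uniform convexity modulus and the case split on $\|a\|$, in particular verifying that the constants from each regime combine to a single $\delta$ depending only on $\varepsilon$. Once the cross-distance bound is established, the new diameter is the maximum of the intra-ball diameters ($2$ and $2\eta$) and the bounded cross-distance $2(1+\eta) - \delta$; taking $\delta$ to be the resulting drop and reversing the rescaling by $r$ finishes the proof.
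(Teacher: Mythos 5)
Your core estimate is the one the paper itself uses: uniform convexity (equivalent to strict convexity in finite dimensions) forces a quantitative drop in the cross-distances $\sup\{\|p-q\| : p\in B(z,r)\setminus B(y,\varepsilon r),\ q\in B(x,\eta r)\}$ below $2(1+\eta)r$. The paper reaches this faster by writing $p-q=(p+y)-(y+x)+(x-q)$, noting $\|x+y\|=\eta r$, and applying uniform convexity directly to the pair $p,y$ (both of norm at most $r$ and at distance at least $\varepsilon r$); your route through the normalized vector $\tilde a$ and the case split on $\|a\|$ arrives at the same cross-distance bound with more bookkeeping, and that part of your argument is sound.

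The gap is in your final sentence, and it is worth flagging because you wrote down exactly the object that exposes it. The diameter of $(B(z,r)\cup B(x,\eta r))\setminus B(y,\varepsilon r)$ is the maximum of the two intra-ball suprema and the cross-distance supremum, and removing $B(y,\varepsilon r)$ deletes only a cap of $B(z,r)$ around the single boundary point $y$: for $d\ge 2$ and $\varepsilon$ small there remain antipodal pairs $z\pm v$ with $\|v\|$ arbitrarily close to $r$ and $v$ transverse to $x-z$, both avoiding $B(y,\varepsilon r)$, so the punctured union still has diameter at least $2r$. Since $|B(z,r)\cup B(x,\eta r)|=2(1+\eta)r$, the drop your argument actually yields is $\min(2\eta,\delta)\,r$, which is not bounded below by any positive function of $\varepsilon$ alone as $\eta\to 0$; ``taking $\delta$ to be the resulting drop'' therefore does not produce a $\delta=\delta(\varepsilon)$. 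To be fair, the paper's own proof has the identical defect --- it bounds only the cross-pairs and never compares against the intra-$B(z,r)$ diameter --- even though the remark following the lemma insists that $\delta$ is independent of $\eta$, which is precisely the feature this step fails to deliver and precisely what the application in Lemma \ref{biglemma} consumes (there $\eta$ is sent to $0$ after $\delta_0$ is fixed). What both arguments genuinely prove is the cross-distance version of the statement; to close the gap you must either restate the lemma in those terms (and check that the application only needs that version) or impose a lower bound on $\eta$ in terms of $\varepsilon$.
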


\begin{proof}Let $\delta>0$ and without loss of generality, let $z=0$.
    It suffices to obtain a  suitable upper bound on the following set of values:
        \begin{align*}
            &\left\{ \|p- q\|~:~ p \in B(0, r) \setminus B(y_0, \varepsilon r) , q \in B(x_0, \eta r)\right\}.
        \end{align*}
    For any $p \in  B(0, r) \setminus  B(y, \varepsilon r)$, $\|p-y\|\geq\varepsilon r$, $\|p\|\leq r$ and $ \|y\|\leq r$. Uniform convexity implies that there exists $\delta$ such that
    \begin{align*}
        \|p+y\| < 2r(1-\delta).
    \end{align*}
The definition of $y$ implies that $\|x+y\|=\eta r$. Then for any $p \in   B(0, r) \setminus  B(y_0, \varepsilon r)$ and $q \in B(x_0, \eta r)$,
    \begin{align*}
        \|p-q\| \leq \|p+y\|+\|-(y+x)\| +\|x-q\| < 2r(1-\delta) + 2\eta r
    \end{align*}
We know that $\left| B(z, r) \cup B(x, \eta r)\} \right|\geq 2r + 2 \eta r$. Therefore,
    \begin{align*}
        \left| B(z, r) \cup B(x, \eta r)\} \right|- \left| (B(z, r) \cup B(x, \eta r)) \setminus B(y, \varepsilon r)\right|&\geq 2r + 2 \eta r- (2r(1-\delta) + 2\eta r)\\
        &=2 \delta r.
    \end{align*}

\end{proof}

It's important to emphasize that $\delta$ in the previous statement does not depend on $\eta$.
%%%%%%%%%%%%%%%%%%%%%%%%%%%%%%%%%%%%%%%%%%%%%%%%%%%%%%%%%%%%%%%%%%%%%%%%%%%%%%%%%

\subsection{Approximation}

Now we arrive at the primary statement:

\begin{proposition}
Let $(\mathbb{R}^d,\|\cdot\|)$ be a real, finite-dimensional strictly convex Banach space.  Let $E \subset \mathbb{R}^d$ be a $\mathcal{H}^m$-measurable set with $\mathcal{H}^m(E)< \infty$.  $E$ is $m$-rectifiable if and only if $\Theta^m(E,x)=1$ for $\mathcal{H}^m$ almost every $x\in E$. 
\end{proposition}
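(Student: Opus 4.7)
The ``only if'' direction is essentially classical and robust to the change of norm. If $E$ is $m$-rectifiable then, modulo an $\mathcal{H}^m$-null set, it is contained in a countable union of Lipschitz images of $\mathbb{R}^m$, at $\mathcal{H}^m$-a.e.\ point of which an approximate tangent $m$-plane exists. Since any two norms on $\mathbb{R}^d$ are bi-Lipschitz equivalent, the standard blow-up computation of $\Theta^m$ at such a point transfers from the Euclidean setting (with the normalizing constant absorbed into the definition of $\omega_m$ in the ambient norm), yielding $\Theta^m(E,x)=1$ a.e. The substance of the proposition is the converse, and I would prove it along the lines sketched in the introduction.

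Step 1 (density $\Rightarrow$ weak $m$-linear approximability). Fix $a \in E$ with $\Theta^m(E,a)=1$ and a small scale $r$. Combining this with the upper-density bound $\Theta^{\ast m}(E,\cdot)\leq 1$ from the second preliminary theorem, the $\mathcal{H}^m$-mass of $E$ in $B(a,r)$ is essentially $\omega_m r^m$, and no sub-ball can carry more than its share of the mass. Suppose for contradiction that for some $\eta>0$ and all affine $m$-planes $W\in A(a,d,m)$, a definite fraction of the mass of $E\cap B(a,r)$ lies outside $N(W,\eta r)$. A pigeonhole covering argument then produces a ``satellite'' ball $B(x,\eta r)$ with $\|x-a\|\approx (1+\eta)r$ carrying $\mathcal{H}^m$-mass of order $\eta^m$. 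Applying Lemma \ref{shrink} with $y = a - r(x-a)/\|x-a\|$, we obtain a $\delta = \delta(\varepsilon)>0$, \emph{independent of $\eta$}, such that removing $B(y,\varepsilon r)$ decreases the diameter of $B(a,r)\cup B(x,\eta r)$ by at least $2\delta r$. Feeding this diameter defect into the sharp diameter-to-measure ratio from the first preliminary theorem, one shows that the satellite mass must be compensated by a corresponding deficit of mass in $B(y,\varepsilon r)$, contradicting $\Theta^m(E,a)=1$. Letting $\eta\to 0$ then yields weak $m$-linear approximability at $\mathcal{H}^m$-a.e.\ $a \in E$.

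Step 2 (symmetry and the combinatorial condition). Applying Lemma \ref{shrink} symmetrically on both sides of the approximating plane $W=W(a,r)$, a second contradiction argument produces the symmetry condition (Lemma \ref{biglemma}): at small scales $E$ is approximately invariant under reflection through $W(a,r)$, and hence under the central reflection $x\mapsto 2a - x$ for points $a$ in a large density-1 subset. Iterating this symmetry and tracking the sum-difference sets $E^{(M)}$ defined in the preliminaries yields a combinatorial closure property on a large subset $E' \subset E$: for each $M$, $(E')^{(M)}$ remains trapped in a small neighborhood of the approximating plane at the relevant scale. This condition is purely affine-combinatorial and depends on the norm only through the initial selection of $E'$; it is preserved under any bi-Lipschitz change of norm.

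Step 3 (appeal to the Mattila--Marstrand argument). Because the combinatorial condition produced in Step 2 is norm-invariant, we may transport it to the Euclidean norm on $\mathbb{R}^d$ without losing anything, and then invoke the argument of \cite{mattila1975hausdorff} essentially verbatim: the combinatorial condition, together with the two projection lemmas advertised for the conclusion, forces the approximating affine planes $W(a,r)$ to vary in a controlled (Lipschitz) fashion on $E'$, whence $E'$ is $m$-rectifiable. Exhausting $E$ by such density-1 measurable subsets gives $m$-rectifiability of $E$. The main obstacle is Step 1: everything after it is a variation of known arguments, but converting the uniform-convexity input of Lemma \ref{shrink} into a clean exclusion of satellite mass requires care with the scales $r$, $\eta r$, and $\varepsilon r$, and the $\eta$-independence of $\delta(\varepsilon)$ is what makes the contradiction survive the limit $\eta\to 0$.
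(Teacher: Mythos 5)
Your outline follows the paper's roadmap at the top level (Lemma \ref{shrink} $\to$ symmetry $\to$ sum--difference sets $\to$ transfer to the Euclidean norm $\to$ projections), but two of your steps do not match how the argument actually has to run, and as written they contain gaps. First, your Step 1 claims to extract weak $m$-linear approximability directly from the density hypothesis by a single satellite-ball application of Lemma \ref{shrink}. That deduction is not available: one application of Lemma \ref{shrink} together with almost-uniformity only yields the point-symmetry conclusion of Lemma \ref{biglemma}, namely that $2a-b \in N(E,\varepsilon\|a-b\|)$ for $a,b$ in an almost-uniform subset. To get an approximating \emph{plane} you must iterate this (Lemma \ref{iterlemma}, producing $A^{(M)} \subset N(E,\varepsilon|A|)$), then use the span lemma (Lemma \ref{approx}) to show that any $k$ sufficiently transversal points of $E_M$ force a neighborhood of their affine span to be covered by $E^{(M)}$, and finally use the dimension bound (Lemma \ref{dim}: an $m$-set cannot fill a neighborhood of an $(m+1)$-plane) to cap the span at dimension $m$. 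So the logical order is symmetry first, combinatorics second, linear approximability third --- your Steps 1 and 2 are inverted, and the inverted version skips the mechanism (the span/dimension argument) that actually produces the plane and derives the contradiction with the upper density bound.

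Second, your Step 3 concludes rectifiability by asserting that the combinatorial condition forces the approximating planes $W(a,r)$ to vary in a Lipschitz fashion. That is not the endgame here, and no such continuity-of-tangents statement is established or needed. The paper's conclusion is a projection-based contradiction: decompose (or assume for contradiction) a purely $m$-unrectifiable regular piece $E^*$; Lemma \ref{MainLemma} makes $E^*$ weakly $m$-linearly approximable in the Euclidean norm; the Besicovitch--Federer-type Lemma \ref{BPT} then gives $H^m[P_V(E^*)]=0$ for every $V \in G(d,m)$; but Lemma \ref{RegProj} shows that an $m$-regular set has projections of nearly full measure at small scales, a contradiction. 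You do cite the projection lemmas, but you assign them the wrong role; the substance of the final section (in particular the covering argument in the proof of Lemma \ref{RegProj}, which must be re-proved because densities are not assumed to exist for the Euclidean norm) is exactly what your sketch replaces with the unproved Lipschitz-variation claim.
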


We begin with a definition.

\begin{definition}
Let $E$ be a $m$-set in a finite-dimensional Banach space $(\mathbb{R}^d,\|\cdot\|)$.  We say that a subset $E_1 \subset E$ is $(\delta,R)$-almost uniform with respect to $E$ if 
	\begin{enumerate}
		\item $\mathcal{H}^m(E \cap S) \leq (1+\delta)  |S|^m$ if $E_1 \cap S \neq \emptyset$ and $|S|<2R$,
		\item $\mathcal{H}^m[E \cap B(a,r)]> (1-\delta)(2r)^m$ if $a \in E_1$ and $0<r<R$.
	\end{enumerate}
\end{definition}

With respect to this definition we have the analogous uniformization lemma:
\begin{lemma} \label{uniform}
Let $(\mathbb{R}^d,\|\cdot\|)$ be a real, finite-dimensional Banach space.  Suppose that $E$ is an $m$-regular subset of $\mathbb{R}^d$, $\epsilon>0$, and $\delta>0$.  Then there are a positive number $R$ and an $m$-set $E_1 \subset E$ such that $\mathcal{H}^m(E\setminus E_1)<\epsilon$ and $E_1$ is $(\delta, R)$-almost uniform with respect to $E$.
\end{lemma}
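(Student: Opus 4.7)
The plan is to apply Egorov's theorem to two pointwise density limits valid at $\mathcal{H}^m$-a.e.\ point of $E$. Since $E$ is $m$-regular,
\[
\lim_{r \to 0^+} \frac{\mathcal{H}^m(E \cap B(x,r))}{(2r)^m} = 1
\]
for $\mathcal{H}^m$-a.e.\ $x \in E$; and the first theorem of Section \ref{prelim} gives
\[
\lim_{t \to 0^+} \sup\left\{\frac{\mathcal{H}^m(E \cap S)}{|S|^m} \,:\, x \in S,\ |S| < t\right\} = 1
\]
for $\mathcal{H}^m$-a.e.\ $x \in E$. Let $E_0 \subset E$ be the full-measure subset on which both limits hold.

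Next, set $f_n(x) := \mathcal{H}^m(E \cap B(x, 1/n))/(2/n)^m$ and $g_n(x) := \sup\{\mathcal{H}^m(E \cap S)/|S|^m : x \in S,\ |S| < 1/n\}$. Both sequences are $\mathcal{H}^m$-measurable in $x$ and converge pointwise to $1$ on $E_0$. Since $\mathcal{H}^m(E_0) \leq \mathcal{H}^m(E) < \infty$, two applications of Egorov's theorem, followed by intersection of the resulting large subsets, produce a set $E_1 \subset E_0$ with $\mathcal{H}^m(E_0 \setminus E_1) < \epsilon$ and a single threshold $R > 0$ such that for every $a \in E_1$ we simultaneously have $\mathcal{H}^m(E \cap B(a,r)) > (1-\delta)(2r)^m$ for all $0 < r < R$ and $\mathcal{H}^m(E \cap S) \leq (1+\delta)|S|^m$ for every $S$ with $a \in S$ and $|S| < 2R$.

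Condition (2) of $(\delta,R)$-almost uniformity is the first of these estimates applied at points $a \in E_1$. For condition (1), given $S$ with $E_1 \cap S \neq \emptyset$ and $|S|<2R$, choose any $a \in E_1 \cap S$ and apply the upper estimate at $a$. Since $E \setminus E_0$ is $\mathcal{H}^m$-null we obtain $\mathcal{H}^m(E \setminus E_1) = \mathcal{H}^m(E_0 \setminus E_1) < \epsilon$, and $E_1$ inherits $\mathcal{H}^m$-finiteness from $E$, hence is an $m$-set. No deep obstacle arises; the only technical issues are the measurability of $g_n$ (handled by reducing the supremum to a countable family through separability considerations on the set of small-diameter subsets containing a given point) and coordinating the two applications of Egorov to yield a single $R$, which is done by intersecting the two Egorov subsets and taking the smaller of the two thresholds.
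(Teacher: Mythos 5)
The paper states this lemma without proof (deferring to the standard uniformization argument of Mattila), and your Egorov-based argument is essentially that intended route: it uses exactly the two a.e.\ density statements recorded in Section \ref{prelim} together with $\mathcal{H}^m(E)<\infty$, and the logic for deducing conditions (1) and (2) from the uniformized estimates is right. One step you should make explicit: Egorov applied to $f_n$ controls the lower density ratio only at the discrete radii $r=1/n$, whereas condition (2) requires it for every $r\in(0,R)$. The fix is the usual interpolation: for $1/(n+1)\leq r<1/n$ one has $B(a,1/(n+1))\subset B(a,r)$ and $(2r)^m\leq(2/n)^m$, so the ratio at radius $r$ is at least $(n/(n+1))^m$ times the ratio at radius $1/(n+1)$; since $(n/(n+1))^m\to 1$, taking $R$ small and the Egorov accuracy strictly better than $\delta$ closes the gap. (No such issue arises for $g_n$, since the supremum over $|S|<1/n$ already covers all smaller diameters.) A slightly cleaner variant that sidesteps both this interpolation and the measurability of $g_n$ is to define directly the increasing sequence of sets $E_n$ consisting of points where both inequalities of $(\delta,1/n)$-almost uniformity hold, note that they exhaust a full-measure subset of $E$, and invoke continuity of the Borel regular outer measure $\mathcal{H}^m$ along increasing sequences.
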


The first major hurdle is the following lemma.  In order to show that regular sets are linearly approximable, we show that regular sets are almost radial symmetric at every point.  Specifically, we must consider the following statement.

\begin{lemma}\label{biglemma}
    Let $(\mathbb{R}^d,\|\cdot\|)$ be a real, strictly convex, finite-dimensional Banach space, let $E_1\subset E \subset \mathbb{R}^d$, $0<\varepsilon<1$, and $R>0$.  There exists $\delta=\delta(\varepsilon, m)\in (0,\frac{1}{3})$ (depending only on $\varepsilon$) such that if $E_1$ is $(\delta, R)$-almost uniform with respect to $E$, $a, b \in E_1$, and $\|a-b\|<R$, then $2a-b \in N(E, \varepsilon\|a-b\|)$.
\end{lemma}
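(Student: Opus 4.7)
The plan is to argue by contradiction: assume the reflected point $c := 2a - b$ satisfies $\mbox{dist}(c, E) \geq \varepsilon r$ with $r = \|a - b\|$, and extract a contradiction from the almost-uniformity of $E_1$. The strategy is to build a set $S_0$ whose diameter is visibly shrunk (via Lemma~\ref{shrink}) by the hypothesis that $E$ avoids a ball around the antipodal point $c$; this shrinkage is a uniform-convexity gain depending only on $\varepsilon$, whereas the almost-uniformity slack is only the $(1\pm\delta)$ factors from conditions (1) and (2), so choosing $\delta$ small enough will force a contradiction.

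Concretely, I would fix $\eta \in (0, 1/2)$ small (to be specified in terms of $\varepsilon$ and $m$), with $r(1+\eta) < R$, and apply Lemma~\ref{shrink} with $z = a$ and $x = b + \eta(b-a)$. Then $\|x - z\| = (1+\eta) r$, and the point $y = z - r(x-z)/\|x-z\|$ coincides exactly with $c = 2a - b$, so Lemma~\ref{shrink} produces $\delta_0 = \delta_0(\varepsilon) > 0$ with
\begin{align*}
|S_0| - |S_0 \setminus B(c, \varepsilon r)| > \delta_0 r,
\end{align*}
where $S_0 := B(a, r) \cup B(x, \eta r)$. Setting $S := E \cap S_0$ and using $E \cap B(c, \varepsilon r) = \emptyset$ forces $S \subset S_0 \setminus B(c, \varepsilon r)$, so $|S| < 2r(1+\eta) - \delta_0 r$. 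Since $a \in E_1 \cap S$ and $|S| < 2R$, property (1) of almost-uniformity gives $\mathcal{H}^m(S) \leq (1+\delta)|S|^m < (1+\delta)\bigl(2r(1+\eta) - \delta_0 r\bigr)^m$, while $B(a,r) \subset S_0$ together with property (2) gives $\mathcal{H}^m(S) \geq \mathcal{H}^m(E \cap B(a, r)) > (1-\delta)(2r)^m$. Combining these bounds and dividing by $(2r)^m$,
\begin{align*}
\frac{1-\delta}{1+\delta} < \bigl(1 + \eta - \tfrac{\delta_0}{2}\bigr)^m.
\end{align*}

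To close the argument, I would first take $\eta$ small enough that $1 + \eta - \delta_0/2 \leq 1 - \delta_0/4$ (possible since $\delta_0$ depends only on $\varepsilon$, and $\eta$ can be made arbitrarily small subject to $\eta < R/r - 1$); the right-hand side is then bounded strictly below $1$ by a constant depending only on $\varepsilon$ and $m$. Choosing $\delta = \delta(\varepsilon, m) \in (0, 1/3)$ sufficiently small that $(1-\delta)/(1+\delta) \geq (1 - \delta_0/4)^m$ contradicts the displayed inequality, completing the proof. The main obstacle is the initial geometric setup: one has to arrange for $c$ to appear as the Lemma~\ref{shrink} point $y$ while $B(a, r)$, which carries nearly maximal $E$-mass by property~(2), simultaneously sits inside $S_0$. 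Once that configuration is in place, the rest is a routine comparison pitting the uniform-convexity gain $\delta_0(\varepsilon)$ against the almost-uniformity slack in $\delta$ and the enlargement slack in $\eta$.
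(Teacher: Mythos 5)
There is a genuine gap at the step ``$S \subset S_0\setminus B(c,\varepsilon r)$, so $|S| < 2r(1+\eta)-\delta_0 r$.'' Removing $B(c,\varepsilon r)$ from $S_0=B(a,r)\cup B(x,\eta r)$ cannot decrease the diameter by an amount independent of $\eta$: the set $B(a,r)\setminus B(c,\varepsilon r)$ still contains pairs of points at mutual distance arbitrarily close to $2r$, namely antipodal pairs $a\pm rv$ with $v$ chosen on the unit sphere so that $\|u+v\|=\|u-v\|\geq 1>\varepsilon$, where $u=(b-a)/r$. Hence $|S_0\setminus B(c,\varepsilon r)|\geq 2r=|S_0|-2\eta r$, so the bound you want fails as soon as $\eta<\delta_0/2$, which is exactly the regime you need for the final comparison. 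What the uniform-convexity computation inside Lemma \ref{shrink} actually controls is only the cross-distances $\|p-q\|$ with $p\in B(a,r)\setminus B(c,\varepsilon r)$ and $q\in B(x,\eta r)$; turning that into a diameter bound on the set that carries the mass requires knowing that $E$ has mass in a small ball about $b$. Your argument never uses $b\in E_1$ (only the direction of $b-a$), and that is a decisive symptom: if the argument were correct, it would show that an $m$-plane $V$ through $a$, which is $(\delta,R)$-almost uniform with respect to itself by Lemma \ref{isod} and the isodiametric inequality, satisfies $2a-b\in N(V,\varepsilon\|a-b\|)$ for every $b$ with $\|a-b\|<R$; that would make $V$ $\varepsilon$-dense in $B(a,R)$, which is false for $m<d$.

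The paper's proof avoids this by working with $A=B(a,(1-2\eta)\rho)$, $B=B(b,\eta\rho)$, and a ball $C$ about the reflected point, and by invoking the lower-density condition (2) at $b$ as well as at $a$, so that $\mathcal{H}^m(E\cap B)>(1-\delta)(2\eta\rho)^m$. If $E$ misses $C$, then any pair of points of $E\cap(A\cup B)$ at distance exceeding $|A|$ must be a cross pair, and cross pairs are exactly what uniform convexity controls; the resulting measure excess of order $(\eta\rho)^m$ contributed by the ball at $b$ is then played off against $\delta$, which is why $\delta$ must be taken of size roughly $\eta^m$ with $\eta$ small compared to $\delta_0(\varepsilon)$, and why $\delta$ depends on $m$ and not just $\varepsilon$. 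To repair your write-up you would need to reinstate a ball about $b$, replace the lower bound $\mathcal{H}^m(S)\geq\mathcal{H}^m(E\cap B(a,r))$ by the sum of the contributions from the balls at $a$ and at $b$, and replace the single diameter estimate by the dichotomy between pairs lying inside $A$ and cross pairs.
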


 The proof here will follow very closely to the original proof of Mattila.

\begin{proof}
Let $\varepsilon>0$, and let $\delta_0=\delta_0(\varepsilon)>0$ be the number given by Lemma \ref{shrink}. Let $0<\delta< \delta_0$ and suppose $E_1\subset E$ is $(\delta, R)$-almost uniform. Finally, let $\eta\in (0, \frac{1}{10})$.

Let $a, b \in E_1$ be such that $0<\|a-b\|=\rho<R$ and define 3 balls in $\mathbb{R}^d$.  The first will be a ball, $A$, centered at $a$ with radius close to but less than the distance between $a$ and $b$.  The second, $B$, will be a small ball centered at $b$ defined so that it doesn't intersect $A$.  The final ball, $C$, will be centered at a point $c$ lying on  both the line containing $a$ and $b$ and the boundary of $A$.  They are defined explicitly as such
    \begin{align*}
        A=B(a, (1-2\eta)\rho) \hspace{.5cm} B=B(b, \eta\rho) \hspace{.5cm} C=B\left(c, \varepsilon(1-2\eta)\rho\right)
    \end{align*}

Our goal is to show that $\mathcal{H}^1(A \cap C \cap E)>0$ which would, in turn, imply that there exists $z \in E$ such that the distance between $z$ and $b'=2a-b$ is less than $\rho\varepsilon$.  To this end, we apply Lemma \ref{shrink} and we get
    \begin{align*}
        |[A\cup B]\setminus C| +\tfrac{1}{2}\delta_0 \rho< |A\cup B|
    \end{align*}
Then for some $c_1>0$,
    \begin{align}\label{imppoint}
        |[A\cup B]\setminus C|^m +c_1\delta_0\rho^m< |A\cup B|^m.
    \end{align}
 The left side of \eqref{imppoint} is bounded below using the hypothesis in the following way
    \begin{align*}
        \frac{1}{1+\delta} \mathcal{H}^m([A\cup B]\setminus C \cap E) +c_1\delta_0\rho^m \leq |[A\cup B]\setminus C|^m +c_1\delta_0\rho^m.
    \end{align*}
In order to bound the right side of \eqref{imppoint}, note that since $A\cap B = \emptyset$, by polynomial expansion, there exists $c_2>0$ such that
    \begin{align*}
        |A\cup B|^m=[|A|+|B|]^m\leq|A|^m+|B|^m+ c_2\delta \rho^m.
    \end{align*}
Again using the hypothesis regarding $E$:

\begin{figure}
\centering
\includegraphics[scale=.3]{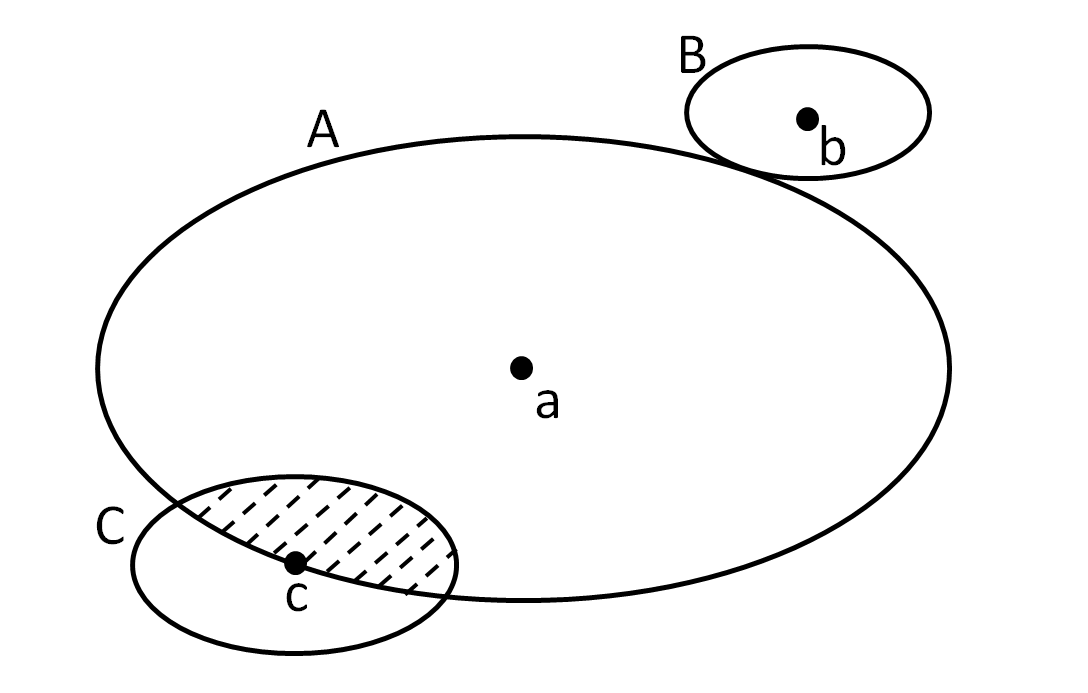}{\centering}  
\caption{\label{balls} Balls shown with an example of a Non-Euclidean norm.  We would like to show that the shaded region is nonempty.}
\end{figure}
    \begin{align*}
        |A|^m+|B|^m+ c_2\eta \rho^m< \frac{1}{1-\delta} \mathcal{H}^m( [A\cup B]\cap E)+c_2\eta \rho^m.
    \end{align*}
and thus 
    \begin{align}\label{inter}
        \frac{1}{1+\delta} \mathcal{H}^m([A\cup B]\setminus C \cap E) +c_1\delta_0\rho^m < \frac{1}{1-\delta} \mathcal{H}^m( [A\cup B]\cap E)+c_2\eta \rho^m.
    \end{align}

By definition of the parameters $(A\cup B)\cap C=A\cap C$.  Using $\mathcal{H}^m([A\cup B]\cap E)- \mathcal{H}^m( (A\cup B)\cap C \cap E)
= \mathcal{H}^m([A\cup B]\setminus C \cap E)$, the fact that $1/(1+\delta)<1$, and rearrangement it follows from equation \eqref{inter} that
    \begin{align*}
        &c_1\delta_0\rho^m + \left[\frac{1}{1+\delta}-\frac{1}{1-\delta} \right] \mathcal{H}^m([A \cup B]\cap E) - c_2\eta \rho^m < \mathcal{H}^m(A\cap C \cap E)\\
        \Leftrightarrow \,&c_1\delta_0\rho^m - \left[\frac{2\delta}{1-\delta^{2}} \right] \mathcal{H}^m([A \cup B]\cap E) - c_2\eta \rho^m < \mathcal{H}^m(A\cap C \cap E)
    \end{align*}
for small enough $\delta>0$ and $\eta>0$, the left side is positive.  
\end{proof}

We can iterate this lemma in the same fashion as Marstrand (\cite{marstrand1961hausdorff} Lemma 2) to obtain the following statement.
\begin{lemma}\label{iterlemma}
    Suppose that $E$ is an $m$-regular subset of $\mathbb{R}^d$, $\eta>0$, $\varepsilon \in (0, 1)$ and $M$ is a positive integer.  Then there are a positive number $d_M$ and an $m$-set $E_M \subset E$ such that $\mathcal{H}^m(E\setminus E_M)<\eta$ and $A^{(M)} \subset N(E, \varepsilon |A|)$ whenever $A \subset E_M$ and $|A|<d_M$.
\end{lemma}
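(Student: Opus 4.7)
The plan is to argue by induction on $M$, iterating Lemma~\ref{biglemma} in the spirit of Marstrand. For the base case $M=1$, I take $\delta = \delta(\varepsilon) \in (0, \tfrac{1}{3})$ as in Lemma~\ref{biglemma} and apply Lemma~\ref{uniform} with this $\delta$ and tolerance $\eta$ to obtain $E_1 \subset E$ which is $(\delta, R)$-almost uniform and satisfies $\mathcal{H}^m(E \setminus E_1) < \eta$. Setting $d_1 := R$, any $z = 2a - b \in A^{(1)}$ with $A \subset E_1$ and $|A| < d_1$ has $a, b \in E_1$ and $\|a - b\| \leq |A| < R$, so Lemma~\ref{biglemma} directly yields $z \in N(E, \varepsilon \|a - b\|) \subset N(E, \varepsilon |A|)$.

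For the inductive step I would assume the conclusion for $M - 1$ holds with a strictly smaller error parameter $\varepsilon'$, of order $\varepsilon / (C \cdot 3^M)$ for a universal constant $C$ fixed below, giving $(E_{M-1}, d_{M-1})$. I set $E_M \subset E_{M-1}$ (a further refinement, to be specified below) and $d_M \leq d_{M-1}$. For $A \subset E_M$ with $|A| < d_M$ and $w = 2z_1 - z_2 \in A^{(M)}$, $z_1, z_2 \in A^{(M-1)}$, the inductive hypothesis places each $z_i$ within $\varepsilon' |A|$ of $E$. My core move is to find nearby points $\tilde z_1, \tilde z_2$ in a common $(\delta(\varepsilon'), R)$-almost uniform set $E^{\ast} \subset E$ at distance at most $\tau |A|$ from the corresponding $z_i$, apply Lemma~\ref{biglemma} to the pair $\tilde z_1, \tilde z_2 \in E^{\ast}$ to obtain $2 \tilde z_1 - \tilde z_2 \in N(E, \varepsilon' \|\tilde z_1 - \tilde z_2\|) \subset N(E, \varepsilon' \cdot 3^M |A|)$, and then invoke $\|w - (2\tilde z_1 - \tilde z_2)\| \leq 3 \tau |A|$ from the triangle inequality. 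Choosing $\tau$ and $\varepsilon'$ so that $3\tau + \varepsilon' \cdot 3^M \leq \varepsilon$, for instance $\tau = \varepsilon/10$ and $\varepsilon' = \varepsilon / (10 \cdot 3^M)$, closes the induction.

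The main obstacle, and the step I expect to require the most care, is the upgrade producing $\tilde z_i \in E^{\ast}$ within $\tau |A|$ of $z_i$. A naive density argument fails because $\mathcal{H}^m(E \setminus E^{\ast}) < \eta$ is merely a global bound, while the balls of interest have radius $\sim \tau |A|$, which may be much smaller than $\eta^{1/m}$ when $|A|$ is tiny. To address this I would further restrict $E_M$ inside $E_{M-1} \cap E^{\ast}$ to the subset where the density $r \mapsto \mathcal{H}^m(E \cap B(x, r))/(\omega_m r^m)$ converges to $1$ uniformly as $r \to 0^+$ (Egorov applied to the $m$-regularity of $E$) and where $E^{\ast}$ has positive local density in $E$. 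Since each $z_i \in A^{(M-1)}$ lies within $3^{M-1} |A|$ of some $a_0 \in A \subset E_M$, the almost uniformity of $E^{\ast}$ centered at $a_0$ combined with the uniform density convergence near $a_0$ supplies $E^{\ast}$-mass in balls close to $z_i$, permitting extraction of $\tilde z_i$ at the required scale, with $d_M$ chosen small enough that all these estimates apply uniformly in the scales produced by iterating to level $M$.
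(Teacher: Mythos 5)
Your base case matches the paper's, and your instinct to induct is right, but the inductive step has a genuine gap exactly where you flagged it: the ``upgrade'' from $z_i \in N(E,\varepsilon'|A|)$ to $\tilde z_i \in E^*$ with $\|\tilde z_i - z_i\|\le \tau|A|$ does not follow from the tools you invoke. The inductive hypothesis only places $z_i$ within $\varepsilon'|A|$ of \emph{some} point $p_i\in E$, and that point is completely uncontrolled: it may lie in $E\setminus E^*$, and $E^*$ may be entirely absent from $B(p_i,\tau|A|)$. The bound $\mathcal{H}^m(E\setminus E^*)<\eta$ is global, so it gives no local control at scale $\tau|A|$, which is allowed to be arbitrarily small compared to anything determined by $\eta$; and an Egorov-type restriction of $E_M$ only controls behavior at the points $a_0\in A$, whereas $z_i$ sits at distance up to $3^{M-1}|A|$ from $a_0$ --- the almost-uniformity lower bound (condition (2) of the definition) applies only to balls \emph{centered at points of the almost-uniform set}, so it says nothing about $E^*$-mass in a ball of radius $\tau|A|$ around $z_i$ or $p_i$. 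In short, ``near $E$'' cannot be promoted to ``near a good subset of $E$'' at a scale far below the one at which the exceptional measure $\eta$ was fixed, and your argument needs exactly that promotion before Lemma \ref{biglemma} can be applied to $\tilde z_1,\tilde z_2$.

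The paper avoids this by decomposing in the opposite order, $A^{(n+1)}=(A^{(1)})^{(n)}$ rather than $(A^{(n)})^{(1)}$: it first applies Lemma \ref{uniform} to the regular set $E_n$ (regular by Theorem \ref{subsets}) to get $E_{n+1}\subset E_n$ almost uniform, so Lemma \ref{biglemma} places $A^{(1)}$ in $N(E_n,\varepsilon_0|A|)$ --- near the \emph{good set} $E_n$, not merely near $E$. It then sets $F:=E_n\cap N(A^{(1)},\varepsilon_0|A|)$, which is by construction a genuine subset of $E_n$ with $|F|<d_n$, and feeds the whole set $F$ into the inductive hypothesis to get $F^{(n)}\subset N(E,\varepsilon_0|F|)$, concluding via $A^{(1)}\subset N(F,\varepsilon_0|A|)\Rightarrow (A^{(1)})^{(n)}\subset N(F^{(n)}, 3^n\varepsilon_0|A|)$. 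This sidesteps any need to locate individual good points near the output of the induction. If you want to salvage your write-up, reverse your decomposition and restructure the induction along these lines; as proposed, the step you identified as delicate is not closable.
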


\begin{proof}

We prove by induction. The base case is Lemma \ref{biglemma} with $\frac{1}{2}\varepsilon$ replacing $\varepsilon$. The induction step begins with the supposition that the statement holds for $M=n$. Then given $\eta_0>0$, $\varepsilon_0= \frac{1}{100}\varepsilon >0$, there exists $E_n \subset E$ and $d_n>0$ such that $\mathcal{H}^m(E\setminus E_n)<\eta$ and $A^{(M)} \subset N(E, \varepsilon_0 |A|)$ whenever $A \subset E_n$ and $|A|<d_n$. 

We use Lemma \ref{uniform} to find $E_{n+1} \subset E_{n}$ and $R_n>0$ such that $\mathcal{H}^m(E_n\setminus E_{n+1})<\tfrac{1}{2}\eta_0$ and $E_{n+1}$ is $(\delta, R_n)$-almost uniform with respect to $E$ with $\delta$ small enough to apply Lemma \ref{biglemma}  with $\varepsilon_0$. Now Lemma \ref{biglemma} implies that if we consider  $A \subset E_{n+1}$ and $|A|<\frac{1}{10}\min(R_n, d_n)$, then $A^{(1)} \subset N(E_{n}, \varepsilon_0|A|)$. The symmetry of the metric implies that if $F:= E_n \cap N(A^{(1)}, \varepsilon_0|A|)$, then
    \begin{align*}
        A^{(1)} \subset N(F, \varepsilon_0|A|)
    \end{align*}
and
    \begin{align*}
        |F| \leq |A^{(1)}| +2\varepsilon_0|A| < d_n
    \end{align*}
The induction hypothesis now implies that 
    \begin{align*}
        A^{(n+1)} \subset N(F^{(n)}, \varepsilon_0 |A|) \subset N(E, \varepsilon_0 |F| + \varepsilon_0|A|) \subset N(E, \varepsilon |A|)
    \end{align*}
Choosing $d_{n+1}:= \frac{1}{10}\min(R_n, d_n)$ completes the argument.

\end{proof}

%%%%%%%%%%%%%%%%%%%%%%%%%%%%%%%%%%%%%%%%%%%%%%%%%%%%%%%%%%%%%%%%%%%%%%%%%%%%%%%%%

\subsection{Passing to the Euclidean Case}

Next is the net approximation lemma.  We can now begin to exploit the equivalence of norms in finite-dimensional Banach spaces to reduce the argument of Mattila.  We should establish some notion of equivalence for our given norm: If $\|\cdot\|_2$ represents the Euclidean norm in $\mathbb{R}^d$, then there exists $L>1$ such that
    \begin{align*}
        L^{-1}\|x\| \leq \|x\|_{2} \leq L \|x\|
    \end{align*}
    for all $x \in \mathbb{R}^d.$ If we define
    \begin{align*}
        |E|_2&:= \sup \left\{ \|x-y\|_2 ~:~ x, y \in E \right\}, \\
        B_2(x, r)&:= \{ y \in \mathbb{R}^d~:~ \|x-y\|_2<r\mbox{ and }\\
        N_2(E, r)&:= \left\{x \in \mathbb{R}^d~:~ \inf_{y \in E}\|x-y\|_2 < r \right\}
    \end{align*}
for $x \in\mathbb{R}^d$, $r>0$. We will use the notion of projections throughout the remainder of the article. Since we are considering multiple notions of distance, there are inherently multiple notions of projections that one can consider. However, we will only need to consider orthogonal projections. From here on out, $P_V(E)$ will denote the orthogonal projection of the set, $E$, onto the plane $V$. Finally, we define
     \begin{align*}
        H^{\alpha}_{\delta}(E) := \inf \left\{ \sum_i |U_i|_2^m ~:~ \{U_i\}_{i=1}^{\infty} \mbox{ is an open cover of } E \mbox{ and } |U_i|_2\leq\delta\right\} 
    \end{align*}
    and $H^{\alpha}(E):= \sup_{\delta>0} H^{\alpha}_{\delta}(E).$
Then $|E|_2 \sim_L |E|$ and $N(E, L^{-1}r) \subset N_2(E, r) \subset N(E, Lr)$. This implies that we have the following corollary to Lemma \ref{iterlemma}:

\begin{corollary}\label{itercor}
    Suppose that $E$ is an $m$-regular subset of $\mathbb{R}^d$, $\eta>0$, $\varepsilon \in (0, 1)$ and $M$ is a positive integer.  Then there are a positive number $d_M$ and an $m$-set $E_M \subset E$ such that $H^m(E\setminus E_M)<\eta$ and $A^{(M)} \subset N_2(E, \varepsilon |A|_2)$ whenever $A \subset E_M$ and $|A|_2<d_M$.
\end{corollary}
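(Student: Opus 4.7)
The plan is to obtain the corollary as a direct translation of Lemma \ref{iterlemma} into the Euclidean framework, using only the equivalence of norms through the constant $L$. First I would record the consequences of $L^{-1}\|\cdot\| \le \|\cdot\|_2 \le L\|\cdot\|$: diameters satisfy $L^{-1}|E| \le |E|_2 \le L|E|$, so comparing the infima defining $\mathcal{H}^m$ and $H^m$ on the same covers yields $H^m(F) \le L^m \mathcal{H}^m(F)$ for every $F$; the neighborhood inclusions $N(E,L^{-1}r)\subset N_2(E,r)\subset N(E,Lr)$ follow the same way.

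Given $\varepsilon\in(0,1)$, $\eta>0$, and $M$, I would apply Lemma \ref{iterlemma} with rescaled parameters $\tilde\varepsilon := \varepsilon/L^2$ and $\tilde\eta := \eta/L^m$ (and the same $M$). This produces a constant $\tilde d_M>0$ and a set $E_M\subset E$ with $\mathcal{H}^m(E\setminus E_M)<\tilde\eta$ such that $A^{(M)} \subset N(E,\tilde\varepsilon|A|)$ for every $A\subset E_M$ with $|A|<\tilde d_M$.

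Then I would set $d_M := \tilde d_M/L$. If $A\subset E_M$ satisfies $|A|_2<d_M$, then $|A|\le L|A|_2 <\tilde d_M$, so Lemma \ref{iterlemma} applies, and chaining the inclusions
\[
A^{(M)} \subset N(E,\tilde\varepsilon|A|) \subset N_2(E, L\tilde\varepsilon|A|) \subset N_2(E, L^2\tilde\varepsilon|A|_2) = N_2(E,\varepsilon|A|_2)
\]
gives the desired neighborhood containment. Finally, the measure bound transfers as $H^m(E\setminus E_M) \le L^m\mathcal{H}^m(E\setminus E_M) < L^m\tilde\eta = \eta$.

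There is no genuine obstacle here; the whole proof is a bookkeeping exercise in powers of $L$. The only thing to be careful about is that one factor of $L$ enters when converting the outer neighborhood from $\|\cdot\|$ to $\|\cdot\|_2$ and a second factor of $L$ enters when converting the diameter inside the neighborhood, which is why $\tilde\varepsilon$ must be chosen of order $\varepsilon/L^2$ rather than $\varepsilon/L$.
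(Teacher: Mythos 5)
Your proposal is correct and is exactly the argument the paper intends: the corollary is stated as an immediate consequence of Lemma \ref{iterlemma} together with the comparisons $|E|_2 \sim_L |E|$ and $N(E, L^{-1}r) \subset N_2(E,r) \subset N(E, Lr)$, which is precisely the bookkeeping in powers of $L$ that you carry out. Your explicit choices $\tilde\varepsilon = \varepsilon/L^2$, $\tilde\eta = \eta/L^m$, $d_M = \tilde d_M/L$ just make the paper's one-line justification precise.
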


The conclusion of this corollary is equivalent to Lemma 4.2 in \cite{mattila1975hausdorff}. Therefore, one can similarly deduce Lemma 4.8 from \cite{mattila1975hausdorff}:

\begin{lemma}[Equivalent to Lemma 4.8 from \cite{mattila1975hausdorff}]\label{MainLemma}
If $E$ is an $m$-regular subset of $(\mathbb{R}^d, \|\cdot\|)$, $\eta>0$ and $0<\mu <L^{-2}$, then there is a positive number $R$ and an $m$-set $E^* \subset E$ such that $H^m(E\setminus E^*) < \eta$ and if $a \in E^*$ and $0<r<R$, then

	\begin{itemize}
		\item  there exists  $V \in A(a,d,m)$ such that $E^* \cap [B_2(a,r) \setminus N_2(V, \mu r)]=\emptyset$ and
        \item $V \cap B_2(a, r) \subset N_2(E, \mu r)$.
	\end{itemize}
\end{lemma}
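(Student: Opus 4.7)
The plan is to deduce Lemma~\ref{MainLemma} from Corollary~\ref{itercor} by the packing-and-density argument of Mattila, carried out in the Euclidean metric $\|\cdot\|_2$ where the combinatorial geometry of the iterated difference operation is already understood and then transferred back by the norm equivalence with constant $L$. First fix an auxiliary parameter $\varepsilon>0$ with $\varepsilon\ll L^{-2}\mu$ and an integer $M=M(m,\varepsilon)$ large enough that for any set $A\subset\mathbb{R}^d$ consisting of $m+1$ affinely independent points with quantitatively bounded Gram matrix, the iterated sum set $A^{(M)}$ forms an $\varepsilon|A|_2$-net of the affine simplex it spans inside any Euclidean ball of radius $O(|A|_2)$. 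Apply Corollary~\ref{itercor} with parameters $(\eta/2,\varepsilon,M)$ to obtain $E_M\subset E$ and $d_M>0$, and then apply Lemma~\ref{uniform} to extract $E^*\subset E_M$ that is $(\delta,R)$-almost uniform for a small $\delta=\delta(\mu,m)$, with $H^m(E\setminus E^*)<\eta$.

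Now fix $a\in E^*$ and $0<r<R$; the construction of the affine plane $V\in A(a,d,m)$ proceeds by selecting quantitatively independent points of $E^*$ near $a$. By induction on $1\le i\le m$, use the lower density bound $\mathcal{H}^m(E\cap B(a,s))>(1-\delta)(2s)^m$ to choose $a_i\in E^*\cap B_2(a,c\mu r)$ (with $c=c(m,L)>0$ small) so that the vectors $a_1-a,\dots,a_i-a$ are $c\mu r$-separated and have Gram matrix of bounded inverse: if no such $a_i$ existed at some step, then $E^*\cap B_2(a,c\mu r)$ would lie inside a thin tube around an $(i-1)$-plane, whose $m$-dimensional Hausdorff measure is insufficient to meet the density lower bound. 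Let $V$ be the affine hull of $\{a,a_1,\dots,a_m\}$. Setting $A=\{a,a_1,\dots,a_m\}$, Corollary~\ref{itercor} gives $A^{(M)}\subset N_2(E,\varepsilon|A|_2)\subset N_2(E,\mu r)$; since $A^{(M)}$ is an $\varepsilon r$-net of $V\cap B_2(a,r)$ by the choice of $M$, the inclusion $V\cap B_2(a,r)\subset N_2(E,\mu r)$ asserted by the second bullet follows.

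For the first bullet, suppose for contradiction there is $x\in E^*\cap B_2(a,r)$ with $\mathrm{dist}_2(x,V)>\mu r$. Then $A'=\{a,a_1,\dots,a_m,x\}$ is affinely independent of dimension $m+1$ with $|A'|_2\sim r$, and Corollary~\ref{itercor} applied to $A'$ yields $A'^{(M)}\subset N_2(E,\varepsilon r)$. By the choice of $M$ the set $A'^{(M)}$ contains $\gtrsim\varepsilon^{-(m+1)}$ points that are pairwise $\varepsilon r$-separated inside the affine $(m+1)$-simplex $V'\cap B_2(a,Cr)$, where $V'=\mathrm{aff}(A')$; the corresponding $\tfrac12\varepsilon r$-balls centered at their nearest points in $E$ are pairwise disjoint, so
\begin{equation*}
\mathcal{H}^m\bigl(E\cap B_2(a,2Cr)\bigr)\;\gtrsim\;\varepsilon^{-(m+1)}(\varepsilon r)^m\;=\;\varepsilon^{-1}r^m.
\end{equation*}
For $\varepsilon$ small enough relative to the geometric constants, this contradicts the almost-uniformity upper bound $\mathcal{H}^m(E\cap S)\le(1+\delta)|S|^m$ for any $S\supset B_2(a,2Cr)\cap E^*$.

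The main obstacle I anticipate is the quantitative combinatorial claim that $A^{(M)}$ genuinely $\varepsilon$-fills the simplex spanned by $A$: both the required $M=M(m,\varepsilon)$ and the implicit constants in the packing argument depend on the conditioning of the Gram matrix of $A$, so the inductive selection of $a_1,\dots,a_m$ must be executed with care to keep the non-degeneracy constants uniform in $a$. Once this combinatorial Euclidean fact is in hand, the translation between $\|\cdot\|$ and $\|\cdot\|_2$ through the bound $\mu<L^{-2}$ is purely book-keeping, and the remainder of the argument reduces to the standard density/packing estimate just described.
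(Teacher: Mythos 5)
Your route diverges from the paper's precisely at the second bullet. The paper derives the first bullet from Lemma \ref{4.6} and then proves the second by contradiction: a ``hole'' $b \in V\cap B_2(a,r)$ with $B_2(b,\mu r)\cap E=\emptyset$ forces $E^*\cap B(a,Lr)$, which lies in a thin neighborhood of $V$, to project into $B=V\cap B(a,Lr)\setminus B(b,L^{-1}\mu r/2)$, and the cylinder estimate of Lemma \ref{4.7} together with the isodiametric identity (Lemma \ref{isod}) then caps $\mathcal{H}^m(E^*\cap B(a,Lr))$ strictly below the lower density bound from Lemma \ref{uniform}. You instead construct $V$ explicitly as the affine hull of well-spread points $a_1,\dots,a_m\in E^*\cap B_2(a,c\mu r)$ and read the second bullet directly off the filling property of $A^{(M)}$ (Lemma \ref{approx}) combined with Corollary \ref{itercor}; your first-bullet argument is essentially the proof of Lemma \ref{dim}/Lemma \ref{4.6} inlined. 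This bypasses Lemma \ref{4.7} and Lemma \ref{isod} entirely, at the cost that $M$, and hence $R$, now depends on $\mu$ through the dilation factor $p\sim(c\mu)^{-1}$ and the non-degeneracy parameter $\lambda\sim c\mu$ in Lemma \ref{approx} (your points span $V$ at scale $c\mu r$ but must control it at scale $r$); that dependence is harmless here but must be tracked, and note that the net scale delivered by Lemma \ref{approx} is tied to $\lambda$, not an independently tunable $\varepsilon$.

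Two steps do not close as written. First, in the packing contradiction you center $\tfrac12\varepsilon r$-balls at the nearest points of $E$ to a separated subset of $A'^{(M)}$ and assert each carries measure at least a constant times $(\varepsilon r)^m$; but Corollary \ref{itercor} only places $A'^{(M)}$ in $N_2(E,\varepsilon|A'|_2)$, and an arbitrary point of $E$ satisfies no lower density bound, so the estimate $\mathcal{H}^m(E\cap B_2(a,2Cr))\geq c\,\varepsilon^{-1}r^m$ does not follow. (Also, $\varepsilon r$-separation of the net points is not enough for disjointness once each center is moved by up to $\varepsilon r$; you need a $4\varepsilon r$-separated subnet.) The repair is the one the paper builds into Lemma \ref{dim}: run Corollary \ref{itercor} on the almost-uniform subset $E_1$, which is still $m$-regular by Theorem \ref{subsets}, so that the approximating points inherit the bound $\mathcal{H}^m(E\cap B(y,s))\geq(1-\delta)(2s)^m$. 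Second, your justification for the existence of the spanning points is misstated: a thin tube $N(W,\tau)$ around an $(i-1)$-plane is an open subset of $\mathbb{R}^d$ and has infinite $\mathcal{H}^m$-measure when $m<d$, so it is not the measure of the tube that is too small but the measure of $E\cap N(W,\tau)\cap B(a,c\mu r)$; bounding that requires the upper regularity $\mathcal{H}^m(E\cap S)\leq(1+\delta)|S|^m$ applied to a covering of the tube by roughly $(c\mu r/\tau)^{i-1}$ balls of radius $\tau$, not the lower density bound alone. Both gaps are repairable with tools already in the paper, but as stated these are the two places where your argument would fail.
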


The details of the pathway from Corollary \ref{itercor} to Lemma \ref{MainLemma} follow very closely to  Mattila \cite{mattila1975hausdorff}. However, there are some very important technical obstacles to overcome. Therefore, the next subsection is an attempt to address the technical differences without completely duplicating the work that appears in \cite{mattila1975hausdorff}.  It's important to note before proceeding that the notions of rectifiability, pure unrectifiability, and weak $m$-linear approximability are all equivalent up to changing the norm.

%%%%%%%%%%%%%%%%%%%%%%%%%%%%%%%%%%%%%%%%%%%%%%%%%%%%%%%%%%%%%%%%%%%%%%%%%%%%%%%%%
%%%%%%%%%%%%%%%%%%%%%%%%%%%%%%%%%%%%%%%%%%%%%%%%%%%%%%%%%%%%%%%%%%%%%%%%%%%%%%%%%

\subsection{From Corollary \ref{itercor} to Lemma \ref{MainLemma}}

\begin{lemma}[Lemma 4.4 from \cite{mattila1975hausdorff}]\label{approx}
    Suppose that $k \in \mathbb{Z}$ and $\lambda, p \in \mathbb{R}_+$ such that $1\leq k\leq d$, $0<\lambda <1$ and $p>1$.  Then there exists a positive integer $M=M(k,\lambda, p)$ with the property:

    If $r>0$, $A = \{a_0,...,a_k\} \subset \mathbb{R}^d$, $\|a_i - a_0\|_2\leq r$ and the Euclidean distance between $a_i$ and $\mbox{span} \{a_0, ..., a_{i-1}\}$ is greater than $\lambda r$ for $i=1,...,k$, then $(\mbox{span } A) \cap B_2(a_0,pr) \subset N_2(A^{(M)}, kr).$
\end{lemma}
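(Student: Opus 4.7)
The plan is to approximate any $y \in (\mathrm{span}\,A) \cap B_2(a_0, pr)$ by an element of $A^{(M)}$ for a suitable $M = M(k, \lambda, p)$, by first realizing $y$ as a bounded real combination of the $a_i$, rounding to a nearby integer combination (with parity correction), and then showing this integer combination is realized as an element of $A^{(M)}$. The key structural observation I would exploit is that every element of $A^{(M)}$ is an integer combination $\sum_{i=0}^k c_i a_i$ with $\sum c_i = 1$, and that the reflection operation $(x, y) \mapsto 2x - y$ preserves the parity of each coefficient of its second argument modulo $2$; hence every element of $A^{(M)}$ has coefficient vector with exactly one odd entry.

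First, I would parametrize: write $y - a_0 = \sum_{i=1}^k t_i(a_i - a_0)$ uniquely (the transversality hypothesis guarantees affine independence of $A$). Using $\mathrm{dist}_2(a_i, \mathrm{span}\{a_0,\ldots,a_{i-1}\}) > \lambda r$, a Gram--Schmidt-style projection argument isolates $t_k$ by projecting $y - a_0$ onto the orthogonal complement of $\mathrm{span}\{a_1 - a_0, \ldots, a_{k-1} - a_0\}$, yielding $|t_k| \leq p/\lambda$. Iterating on $y - t_k(a_k - a_0)$, whose norm is at most $pr(1 + 1/\lambda)$, gives $|t_i| \leq C = C(k, \lambda, p)$ for every $i$.

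Next, I would round each $t_i$ to a nearby integer $n_i$, producing $y' := a_0 + \sum n_i(a_i - a_0) = \sum_{i=0}^k c_i a_i$ with $c_0 = 1 - \sum_{i \geq 1} n_i$, $c_i = n_i$ for $i \geq 1$, so $\sum c_i = 1$ and $|c_i| \leq N = N(k, \lambda, p)$. A naive rounding may produce a coefficient vector with more than one odd entry, violating the parity constraint above; I would therefore flip some of the $n_i$ by $\pm 1$ so that at most one of $n_1, \ldots, n_k$ remains odd. Each flip costs at most $r$ in Euclidean error, at most $k - 1$ flips are needed, and each unflipped coordinate contributes at most $r/2$, so the triangle inequality yields $\|y - y'\|_2 \leq (k - 1/2) r < kr$.

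Finally, I would prove by induction on $|c|_\infty$ that every integer combination $\sum c_i a_i$ with $\sum c_i = 1$, exactly one $c_i$ odd, and $|c|_\infty \leq N$ lies in $A^{(M)}$ for some $M = M(k, N) = O_k(\log N)$. The inductive step decomposes the coefficient vector as $c = 2c' - c''$ with $|c'|_\infty, |c''|_\infty \leq (|c|_\infty + O(1))/2$ and with both $c'$ and $c''$ again satisfying the sum-one and one-odd-entry conditions; the induction hypothesis then furnishes $x', x'' \in A^{(M-1)}$ realizing $c', c''$, whence $2x' - x'' \in A^{(M)}$ realizes $c$. The hard part of the argument is precisely this combinatorial bookkeeping: one must show that the decomposition $c = 2c' - c''$ can always be arranged to preserve both the sum-one constraint and the one-odd-entry parity constraint, while halving the $\ell^\infty$ norm up to a bounded additive constant, so that the total depth $M$ of iteration depends only on $k$, $\lambda$, and $p$. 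Once this step is in hand, $y' \in A^{(M)}$ together with $\|y - y'\|_2 < kr$ yields $y \in N_2(A^{(M)}, kr)$, as required.
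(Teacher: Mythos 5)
The paper does not actually prove this lemma---it is quoted verbatim from Mattila, whose argument proceeds by induction on $k$, building a progressively finer net in the affine hull---so your proposal is being measured against the cited source rather than against an in-text proof. Your overall strategy is sound and several of its ingredients are correct and nontrivial: the observation that every element of $A^{(M)}$ is an integer affine combination $\sum c_i a_i$ with $\sum c_i=1$ and (since $2c-d\equiv d \bmod 2$) exactly one odd coefficient; the Gram--Schmidt bound $|t_i|\le C(k,\lambda,p)$, which correctly uses that the distance to the linear span dominates from below the relevant orthogonal component because the affine hull is contained in the linear span; and the error accounting $(k-1)\cdot r + \tfrac{1}{2}r < kr$ for rounding with parity repair.

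The genuine gap is the final step, which you yourself label ``the hard part'' and then only describe rather than carry out: that every integer vector $c$ with $\sum c_i=1$, exactly one odd entry, and $|c|_\infty\le N$ is realized in $A^{(M)}$ with $M=M(k,N)$. This is the heart of the lemma, and the halving you propose does not work as stated. For $k=2$ take $c=(2,2,-3)$: the natural choice $c'=\lceil c/2\rceil=(1,1,-1)$ has sum $1$ but \emph{three} odd entries, hence by your own parity invariant is \emph{not} realizable in any $A^{(M)}$, so the decomposition $c=2c'-c''$ collapses. The claim is still true, but one must prove it: writing $c'_i=\lfloor c_i/2\rfloor+\epsilon_i$, note that $\sum_i\lfloor c_i/2\rfloor=0$ forces an even number of odd entries among the $\lfloor c_i/2\rfloor$, so for any target odd coordinate $j$ the required parity vector for $\epsilon$ has an odd number of ones; one can then choose $\epsilon_i\in\{-1,0,1\}$ with those parities and $\sum\epsilon_i=1$, which yields $\sum c'_i=1$, exactly one odd $c'_i$, $|c'|_\infty\le |c|_\infty/2+1$, and $|c''_i|=|2\epsilon_i-(c_i\bmod 2)|\le 3$. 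This reduces everything in $O(\log N)$ steps to the base case $|c|_\infty\le 3$, which is a finite verification that must also be supplied. Without this argument (or Mattila's induction on $k$ in its place) the proof is incomplete.
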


From here we can show that we can approximate a regular set $E$ by a weakly linearly approximable set.  The first step is the following lemma

\begin{lemma}[Lemma 4.5 from \cite{mattila1975hausdorff}]\label{dim}
    There is a constant $K>1$ depending only on $d$ and $m$ such that for any $m$-regular set $E \subset \mathbb{R}^d$ and for $\eta >0$ there is a positive number $r_0$ and an $m$-set $E_0 \subset E$ with the properties:

	   \begin{enumerate}
	       \item $H^m(E\setminus E_0) < \eta$.
	       \item If $0<r<r_0$, $a \in E_0$ and $T \in A(a,d,m+1)$, then $T \cap B_2(a,Kr) \not\subset N_2(E_0, r)$.
	   \end{enumerate}
\end{lemma}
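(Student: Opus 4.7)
The plan is to apply Lemma \ref{uniform} to extract from $E$ an $m$-set $E_0$ of almost full measure that is $(\delta,R)$-almost uniform for a suitably small $\delta$ and some $R>0$, and then to verify property (2) by contradiction: if an $(m+1)$-affine plane through $a\in E_0$ were trapped in $N_2(E_0,r)$, a packing argument would force too many pairwise separated points of $E_0$ into a single $\|\cdot\|$-ball, violating the upper density bound supplied by almost uniformity.

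Concretely, I would fix $\delta\in(0,1/2)$ and choose $r_0 \leq R/(4L(K+1))$, where $L$ is the norm-equivalence constant of the introduction and $K$ is the constant to be determined. Suppose, toward a contradiction, that $a\in E_0$, $0<r<r_0$ and $T\in A(a,d,m+1)$ satisfy $T\cap B_2(a,Kr)\subset N_2(E_0,r)$. Since $T$ is a Euclidean $(m+1)$-flat, a standard volume comparison produces a maximal $4r$-Euclidean-separated set $\{y_1,\dots,y_N\}\subset T\cap B_2(a,Kr)$ with $N\geq c_m K^{m+1}$ for a constant $c_m>0$ depending only on $m$. By the trapping hypothesis each $y_i$ has an $E_0$-neighbor $x_i$ within Euclidean distance $r$, so the $x_i$ are Euclidean-$2r$-separated, hence $\|\cdot\|$-separated by $2r/L$, which makes the balls $B(x_i,r/L)$ pairwise disjoint.

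Then condition (2) of almost uniformity gives $\mathcal{H}^m(E\cap B(x_i,r/L))\geq (1-\delta)(2r/L)^m$ for each $i$ (since $r/L<R$), while condition (1) applied to the enclosing ball $B(a,\rho)$ with $\rho\leq 2L(K+1)r<R$ yields $\mathcal{H}^m(E\cap B(a,\rho))\leq (1+\delta)(2\rho)^m$. Additivity over the disjoint subballs forces $c_m K^{m+1}(1-\delta)(2/L)^m\leq (1+\delta)(4L(K+1))^m$, which fails once $K$ is chosen sufficiently large in terms of $d$, $m$, and $L$. That contradiction establishes property (2), while property (1) is built into the extraction of $E_0$ from Lemma \ref{uniform}.

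The main obstacle is dimensional bookkeeping rather than a deep geometric fact: the lemma is phrased in Euclidean balls $B_2, N_2$ but almost uniformity is most naturally formulated with $\|\cdot\|$-balls, so one must route the $L$ factors consistently so that they enter only the final choice of $K$ (and of $r_0$), not the measure loss $\eta$. The packing lower bound $N\geq c_m K^{m+1}$ is a standard consequence of $T$ being isometric to Euclidean $\mathbb{R}^{m+1}$, but it must be produced with a constant independent of the particular plane $T$ so that the resulting $K$ depends only on $d$, $m$, and (implicitly) on the fixed norm through $L$.
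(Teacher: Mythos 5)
Your argument is correct and is essentially the proof the paper has in mind: the paper gives no details for Lemma \ref{dim} beyond the remark that an $m$-dimensional measure cannot fill the $(m+1)$-dimensional ball $T \cap B_2(a,Kr)$, and your uniformization-plus-packing computation (a $4r$-separated net of size at least $c_m K^{m+1}$ yielding that many disjoint balls, each carrying $E$-measure at least $(1-\delta)(2r/L)^m$ by the lower almost-uniformity bound, against an upper bound of order $(1+\delta)(2L(K+2)r)^m$ for the enclosing ball) is exactly the standard realization of that remark. The only cosmetic discrepancy is that your $K$ also depends on the norm-equivalence constant $L$, whereas the statement asserts dependence on $d$ and $m$ only; since the norm is fixed throughout and the same $L$-dependence already appears in, e.g., Lemma \ref{MainLemma}, this is harmless.
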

The proof depends on the fact that $E^{(M)}$ stays close to $E$ and the Hausdorff dimension of $E$ is not enough to cover $B_2(a,Kr)\cap T$.  The following lemma crystallizes this idea
\begin{lemma}
Let $d>m$.  Given a regular $m$-set $E$, we can find a positive number $r_0$ and a $m$-set $E_0 \subset E \subset \mathbb{R}^d$ with the property:

For no positive number $r<r_0$ does the set $N_2(E_0, r)$ contain a (Euclidean) sphere of radius $Kr$, where $K=10^3$.
\end{lemma}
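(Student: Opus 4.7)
My plan is to pick a nicely-uniform subset $E_0 \subset E$ via Lemma \ref{uniform} and then argue by a packing-versus-measure comparison that no Euclidean sphere of radius $Kr$ can fit inside $N_2(E_0,r)$ for small $r$.

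Fix $\eta>0$ and apply Lemma \ref{uniform} with a sufficiently small $\delta>0$ (to be chosen later) to produce an $m$-set $E_0 \subset E$ that is $(\delta,R)$-almost uniform with respect to $E$ and satisfies $\mathcal{H}^m(E \setminus E_0) < \eta$. Choose $r_0$ proportional to $R$, with the constant absorbing $L$ and $K$, so that every Euclidean ball of radius $(K+2)r$ has $\|\cdot\|$-diameter less than $2R$ whenever $r < r_0$.

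Suppose for contradiction that some Euclidean sphere $S = \partial B_2(a,Kr)$ is contained in $N_2(E_0,r)$ for some $r < r_0$. Choose a maximal $4r$-separated set of points $\{y_1,\ldots,y_N\} \subset S$; a standard packing estimate on a $(d-1)$-sphere of radius $Kr$ yields $N \geq c_d K^{d-1}$ for a dimensional constant $c_d > 0$. For each $y_i$ pick $x_i \in E_0$ with $\|y_i - x_i\|_2 < r$; separation on $S$ forces $\|x_i - x_j\|_2 \geq 2r$, so the Euclidean balls $B_2(x_i, r/2)$ are pairwise disjoint and all lie inside $B_2(a, (K+2)r)$. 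Using the inclusion $B(x_i, r/(2L)) \subset B_2(x_i, r/2)$ and the lower uniformity bound gives
\[
\sum_{i=1}^N \mathcal{H}^m(E \cap B_2(x_i, r/2)) \;\geq\; c_d (1-\delta) K^{d-1} r^m / L^m.
\]
The upper uniformity bound applied to the single enveloping ball $B_2(a,(K+2)r)$ (which contains each $x_i \in E_0$ and has $\|\cdot\|$-diameter $\leq 2L(K+2)r < 2R$) yields
\[
\mathcal{H}^m(E \cap B_2(a,(K+2)r)) \;\leq\; (1+\delta)(2L(K+2))^m r^m.
\]
Combining the two inequalities, the factors of $r^m$ cancel and, after simplification (using $K+2 \leq 2K$), produces $K^{d-1-m} \leq C_0(L,m,\delta)$ for a constant $C_0$ independent of $K$. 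Since $d > m$ and $K = 10^3$ is large, choosing $\delta$ small enough forces the contradiction.

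The main obstacle is precisely the borderline case $d = m+1$, where $d-1-m = 0$ and the direct packing-versus-volume comparison yields no asymptotic gap at all — the left side is a constant and one must win purely on explicit numerical constants. In that regime I expect to need a sharper input: either a refined packing constant $c_d$ tailored to the sphere geometry, or an appeal to Lemma \ref{iterlemma} to rule out an $m$-regular set shadowing a curved $m$-sphere by comparing iterated difference sets that would have to wrap around $S$. The specific choice $K = 10^3$ in the lemma statement strongly suggests that the constants have been tuned so the elementary argument still closes in this critical dimension once $\delta$ is taken small enough relative to $L$ and $m$.
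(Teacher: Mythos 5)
There is a genuine gap here, and you have in fact located it yourself: the packing-versus-measure comparison cannot close in the codimension-one case $d=m+1$, which is not a peripheral technicality but the central case (Marstrand's original theorem concerns $2$-sets in $\mathbb{R}^3$, i.e.\ exactly $d=m+1$). When $d-1=m$ the sphere $\partial B_2(a,Kr)$ is itself $m$-dimensional, both sides of your final inequality scale like $K^{m}r^{m}$, and no choice of $\delta$ produces a contradiction; your closing paragraph concedes this and defers the fix to ``a sharper input,'' so the critical case is simply not proved. There is also a quantitative problem even when $d\geq m+2$: the constant $C_0$ you derive is at least of order $L^{2m}(K+2)^m/K^m$, the factor $L^{2m}$ coming from converting between $\|\cdot\|$-balls and Euclidean balls in the two uniformity bounds. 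Since $K=10^3$ is fixed in the statement, shrinking $\delta$ only improves the factor $(1+\delta)/(1-\delta)$ and cannot force $K^{d-1-m}>C_0$ once the norm-equivalence constant $L$ is large; the argument would require $K$ to be chosen after $L$, which the statement does not permit.

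The paper states this lemma without proof, but the sentence introducing it indicates the intended mechanism, which differs from yours in exactly the way needed to survive $d=m+1$: one does not compare the sphere's own surface measure against a covering of $E_0$, but first uses Corollary \ref{itercor} (that $A^{(M)}\subset N_2(E,\varepsilon|A|_2)$ for $A\subset E_M$) together with Lemma \ref{approx} to convert the curvature of the sphere into higher-dimensional content. Points of $E_0$ lying within $r$ of a sphere of radius $Kr$ contain well-spread, affinely independent configurations $A=\{a_0,\dots,a_{m+1}\}$ whose iterated reflections $x\mapsto 2x-y$ fill a neighborhood of an $(m+1)$-dimensional ball of radius comparable to $Kr$, and that ball must then lie in a thin neighborhood of $E$. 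Only at that stage is the dimension count applied, where the comparison is $(m+1)$-volume against $m$-measure and gains a factor of $K$ independently of the codimension. Your own suggestion of comparing iterated difference sets that must wrap around $S$ is pointing at precisely this mechanism; to repair the proposal you need to carry it out, rather than tune the packing constants.
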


Now Lemmas \ref{iterlemma}, \ref{approx}, and \ref{dim} imply 
\begin{lemma}[Lemma 4.6 from \cite{mattila1975hausdorff}]\label{4.6}
    If $E$ is an $m$-regular subset of $\mathbb{R}^d$, $\eta>0$ and $0<\lambda< 1$, then there are a positive number $d^*$ and an $m$-set $E^* \subset E$ with the properties:
    \begin{enumerate}
        \item $H^m(E \setminus E^*) <\eta.$
        \item If $a \in E^*$ and $0< r< d^*$, then there exists $V \in A(a, d, m)$ such that $E^* \cap [B_2(a, r) \setminus N_2(V, \lambda r)]=\emptyset$
    \end{enumerate}
\end{lemma}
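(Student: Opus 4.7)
I would deduce Lemma \ref{4.6} by combining the three preceding ingredients --- the iterated-approximation Corollary \ref{itercor}, the net-spanning Lemma \ref{approx}, and the dimension-counting Lemma \ref{dim} --- via a greedy selection of $m+1$ maximally spread points. The intuition is that if $E^*$ were not locally concentrated near some $m$-plane at scale $r$, we could extract enough well-separated directions from $E^*$ to generate (through iterated sum-difference sets) a dense set in an $(m{+}1)$-dimensional affine subspace, which $E$ is too small to support.

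First I would fix the parameter cascade. Starting with $\eta>0$ and $\lambda \in (0,1)$, pick an auxiliary $\lambda' = \lambda'(\lambda,m,d) \in (0,\lambda)$ and a scale factor $p=p(\lambda,m,d) \geq K\cdot(m+2)$ (where $K$ is the constant from Lemma \ref{dim}), and let $M = M(m+1,\lambda',p)$ be produced by Lemma \ref{approx}. Pick $\varepsilon \in (0,1)$ small (for instance $\varepsilon < 1$, but crucially $\varepsilon < \lambda'$). Apply Lemma \ref{dim} to $E$ with tolerance $\eta/2$ to extract $E_0 \subset E$ and a scale $r_0 > 0$, and apply Corollary \ref{itercor} to $E$ (or to $E_0$) with the above $\varepsilon$ and $M$ and tolerance $\eta/2$ to extract $E_M \subset E$ and scale $d_M > 0$. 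Set $E^* := E_0 \cap E_M$ and $d^* := \tfrac{1}{10}\min(r_0, d_M)$. Property (1) holds with room to spare.

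For property (2), fix $a \in E^*$ and $0 < r < d^*$ and run a greedy selection: set $a_0 := a$, and for $i = 1,2,\ldots$ choose $a_i \in E^* \cap B_2(a,r)$ maximizing the Euclidean distance to $\mathrm{span}\{a_0,\ldots,a_{i-1}\}$. Stop at the first index $k$ for which this maximum distance falls below $\lambda' r$. If $k \leq m$, then by construction every point of $E^* \cap B_2(a,r)$ lies within $\lambda' r \leq \lambda r$ of $\mathrm{span}\{a_0,\ldots,a_{k-1}\}$; extending this $(k{-}1)$-dimensional affine subspace arbitrarily to some $V \in A(a,d,m)$ gives $E^* \cap [B_2(a,r)\setminus N_2(V,\lambda r)] = \emptyset$. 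The only danger is that the selection continues all the way to $k = m+1$, producing $A := \{a_0,\ldots,a_{m+1}\}$ satisfying the hypotheses of Lemma \ref{approx} with parameters $(k,\lambda',p)$ and base radius $r$. Then Lemma \ref{approx} yields
\begin{align*}
    T \cap B_2(a_0, pr) \subset N_2\!\bigl(A^{(M)},\, (m+1) r\bigr),
\end{align*}
where $T := \mathrm{span}(A)$ is an $(m{+}1)$-dimensional affine subspace through $a$. Since $A \subset E^* \subset E_M$ and $|A|_2 \leq 2r < d_M$, Corollary \ref{itercor} gives $A^{(M)} \subset N_2(E, \varepsilon \cdot 2r)$, and after a further passage to $E_0$ (absorbed into the constants by shrinking $r$ if needed) one obtains
\begin{align*}
    T \cap B_2(a, pr) \subset N_2\!\bigl(E_0,\, ((m+1) + 2\varepsilon)\, r\bigr).
\end{align*}
Since $p \geq K((m+1) + 2\varepsilon)$, this places a Euclidean ball of radius $K \cdot r'$ (with $r' := ((m+1)+2\varepsilon)r$) inside an $m{+}1$-dimensional affine plane through $a$ and within the $r'$-neighborhood of $E_0$, directly contradicting Lemma \ref{dim}, provided $r'$ is below the threshold supplied there. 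So the greedy process must stop by step $m$, which finishes (2).

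The main obstacle is the parameter bookkeeping: $M$ must be chosen before applying Corollary \ref{itercor}, but $M$ depends on $\lambda'$ and $p$, which in turn must be calibrated so that the $(m{+}1)r$ thickness in Lemma \ref{approx} and the $\varepsilon|A|_2$ thickness in Corollary \ref{itercor} together still fit inside the $Kr$-gap that Lemma \ref{dim} forbids. Once one fixes $p$ as a suitable multiple of $K$ and then takes $\varepsilon$ and the Lemma \ref{dim} scale $r_0$ small relative to $p$, the three inclusions chain cleanly into a contradiction, and the remainder is bookkeeping of measure-losses, each absorbed into $\eta/2$.
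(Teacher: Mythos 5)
Your argument is essentially the proof the paper intends (and the one in Mattila's original): the paper only asserts that Lemmas \ref{iterlemma}, \ref{approx} and \ref{dim} combine to give Lemma \ref{4.6}, and your greedy selection of maximally spread points in $E^*\cap B_2(a,r)$, followed by the chain of inclusions placing an $(m+1)$-plane ball inside a small neighborhood, is exactly that combination, with only routine constant adjustments needed (e.g.\ $\varepsilon<\tfrac12$ so that $(m+1)+2\varepsilon\le m+2$, and $d^*\le \min(r_0,d_M)/(m+3)$ so the contradiction scale stays below $r_0$). One caution: you must take the branch in which Corollary \ref{itercor} is applied to the regular set $E_0$ itself (legitimate by Theorem \ref{subsets}), because Lemma \ref{dim} forbids spheres only in neighborhoods of $E_0$, and a neighborhood of the larger set $E$ need not be contained in any comparable neighborhood of the subset $E_0$ --- ``shrinking $r$'' does not repair that step, whereas applying the corollary to $E_0$ makes the whole chain land in $N_2(E_0,\cdot)$ as required.
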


Finally, we have the last  two pieces necessary to prove Lemma \ref{MainLemma}, which importantly do not require a density assumption:

We have the following corollary to the isodiametric inequality (see, for example, \cite{rigot2011isodiametric}):
\begin{lemma}\label{isod}
For any plane, $V \in A(0, d, m)$, $\mathcal{H}^m(V \cap B(0, r)) = (2r)^m.$
\end{lemma}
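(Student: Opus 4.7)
The plan is to reduce the problem to computing a Hausdorff measure in the $m$-dimensional normed space $(V,\|\cdot\|_V)$, and then to obtain matching lower and upper bounds by two applications of the isodiametric inequality. Identify $V$ with $\mathbb{R}^m$ by fixing a basis; under this identification $V\cap B(0,r)$ is precisely the open $r$-ball $B_V(0,r)$ of the restricted norm. Write $\mathcal{L}^m$ for Lebesgue measure on $V$; by scaling, $\mathcal{L}^m(B_V(0,s))=s^m\mathcal{L}^m(B_V(0,1))$ for every $s>0$.

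For the lower bound, I would combine the isodiametric inequality of \cite{rigot2011isodiametric} -- in the normed setting the classical Busemann statement $\mathcal{L}^m(A)\leq \mathcal{L}^m(B_V(0,|A|/2))$, a one-line consequence of $A-p\subset \tfrac{1}{2}(A-A)$ and Brunn--Minkowski -- with countable subadditivity. Given any open cover $\{U_i\}$ of $V\cap B(0,r)$ with $|U_i|\leq \delta$, the traces $\{U_i\cap V\}$ cover $V\cap B(0,r)$ inside $V$ with $|U_i\cap V|\leq |U_i|$, so
\[
r^m\mathcal{L}^m(B_V(0,1))=\mathcal{L}^m(V\cap B(0,r))\leq \sum_i \mathcal{L}^m(U_i\cap V)\leq 2^{-m}\mathcal{L}^m(B_V(0,1))\sum_i |U_i|^m.
\]
Cancelling yields $\sum_i |U_i|^m\geq (2r)^m$, and hence $\mathcal{H}^m(V\cap B(0,r))\geq (2r)^m$.

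For the matching upper bound I would saturate the inequality with a packing of balls, which realize equality in the Busemann form. Fix $\delta>0$ and consider $\mathcal{F}=\{B_V(x,s) : x\in V\cap B(0,r),\ 0<s<\delta/2,\ B_V(x,s)\subset V\cap B(0,r)\}$, which is a fine Vitali cover of the open set $V\cap B(0,r)$. Vitali's theorem, applicable because $\mathcal{L}^m$ is doubling in any norm on $\mathbb{R}^m$, produces a countable disjoint subfamily $\{B_V(x_i,s_i)\}$ with $\mathcal{L}^m\bigl((V\cap B(0,r))\setminus \bigcup_i B_V(x_i,s_i)\bigr)=0$. Disjointness and containment then force
\[
\sum_i s_i^m\, \mathcal{L}^m(B_V(0,1))=\mathcal{L}^m(V\cap B(0,r))=r^m\mathcal{L}^m(B_V(0,1)),
\]
so $\sum_i (2s_i)^m=(2r)^m$. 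The $\mathbb{R}^d$-open balls $B(x_i,s_i)$ have diameter $2s_i<\delta$ and cover $V\cap B(0,r)$ up to the $\mathcal{L}^m$-null residue $R$; by equivalence of $\|\cdot\|$ with the Euclidean norm, $R$ has vanishing $\mathcal{H}^m$-measure and so admits an open $\delta$-cover of total $|\cdot|^m$ less than any prescribed $\eta>0$. Combining yields $\mathcal{H}^m_\delta(V\cap B(0,r))\leq (2r)^m+\eta$, and letting $\eta\to 0$ and $\delta\to 0^+$ gives the reverse inequality.

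The only nonroutine ingredient is the isodiametric inequality in $(V,\|\cdot\|_V)$; the rest is a standard packing/covering manipulation built around the fact that balls attain equality in the Busemann bound.
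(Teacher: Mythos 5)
The paper offers no proof of this lemma, simply citing the isodiametric inequality, and your argument is precisely the standard derivation from that inequality (lower bound via isodiametric plus countable subadditivity applied to the traces $U_i\cap V$, upper bound via a Vitali packing by balls, which attain equality), so it matches the intended approach and is correct. One small correction to a parenthetical: the containment $A-p\subset\tfrac{1}{2}(A-A)$ is false in general (take $A=\{0,1\}\subset\mathbb{R}$ and $p=0$); the normed isodiametric inequality instead follows by applying Brunn--Minkowski to conclude $\mathcal{L}^m\bigl(\tfrac{1}{2}(A-A)\bigr)\geq\mathcal{L}^m(A)$ and observing that $\tfrac{1}{2}(A-A)$ is centrally symmetric with diameter at most $|A|$, hence contained in $B_V(0,|A|/2)$.
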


\begin{lemma}[Lemma 4.7 from \cite{mattila1975hausdorff}]\label{4.7}
    Let $E\subset \mathbb{R}^d$ be an $m$-set and $0<\epsilon<1$. Then there are a positive number, $R$, and an $m$-set $E_0 \subset E$ such that if $a \in \mathbb{R}^d$, $V \in A(a, d, m)$, $B \subset V$ and $0<L^2h< l< R$, then 
    \begin{align*}
        \mathcal{H}^m(E_0 \cap P^{-1}_V(B) \cap N(V, h)) \leq \mathcal{H}^m(E_0 \cap N(B, L^2h) )\leq (1+\epsilon)(1+L^2h/l)^m \mathcal{H}^m(N(B, l) \cap V).
    \end{align*}
\end{lemma}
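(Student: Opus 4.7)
The plan is to adapt Mattila's proof of his Lemma~4.7 in \cite{mattila1975hausdorff}, carefully tracking the norm-equivalence constant $L$. The only place the hypothesis $\mathcal{H}^m(E)<\infty$ enters is through the upper-density theorem from Section~\ref{prelim}: for $\mathcal{H}^m$-a.e.\ $x\in E$, $\lim_{\delta\to 0^+}\sup\{\mathcal{H}^m(E\cap S)/|S|^m:x\in S,\,|S|<\delta\}=1$. By standard Egorov-type uniformization I would fix a small $\epsilon'=\epsilon'(\epsilon,L)>0$ and produce an $m$-set $E_0\subset E$ together with $R>0$ such that $\mathcal{H}^m(E\cap S)\leq(1+\epsilon')|S|^m$ whenever $S\cap E_0\neq\emptyset$ and $|S|<2R$.

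The first inequality reduces to the set inclusion $P_V^{-1}(B)\cap N(V,h)\subset N(B,L^2h)$. Indeed, if $x\in P_V^{-1}(B)\cap N(V,h)$, then there exists $y_0\in V$ with $\|x-y_0\|<h$, hence $\|x-y_0\|_2<Lh$. Since $P_V$ is Euclidean orthogonal projection, $\|x-P_V(x)\|_2\leq\|x-y_0\|_2<Lh$, so $\|x-P_V(x)\|\leq L\|x-P_V(x)\|_2<L^2h$; combined with $P_V(x)\in B$ this places $x$ in $N(B,L^2h)$, and the first inequality follows by monotonicity of $\mathcal{H}^m$.

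For the second inequality I would apply the $5r$-covering lemma to the family of Banach balls $\{B(x,(l-L^2h)/5):x\in E_0\cap N(B,L^2h)\}$, extracting a disjoint subfamily $\{B(x_i,(l-L^2h)/5)\}_{i\in I}$ whose $5$-enlargements $B(x_i,l-L^2h)$ cover $E_0\cap N(B,L^2h)$. Since each enlarged ball has diameter $2(l-L^2h)<2R$ and meets $E_0$, the density estimate yields
\[
\mathcal{H}^m(E_0\cap N(B,L^2h))\leq(1+\epsilon')\,|I|\,(2(l-L^2h))^m.
\]
To bound the cardinality $|I|$, the same calculation used in the first inequality shows $P_V(x_i)$ lies at controlled Banach distance from $B$ inside $V$, and since $P_V$ is $L^2$-Lipschitz in the Banach metric, the disjoint small balls project into $N(B,l)\cap V$ with multiplicity bounded via Besicovitch's theorem in $V$. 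A careful matching of the covering radius to the gap $l-L^2h$ then yields an estimate of the form $|I|\,(l-L^2h)^m\leq C(L,m)\,\mathcal{H}^m(N(B,l)\cap V)$, and combining the two bounds with the geometric identity relating $l-L^2h$ and $1+L^2h/l$ produces the required factor $(1+\epsilon)(1+L^2h/l)^m$ after absorbing lower-order terms into $\epsilon'$.

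The main obstacle is the precise constant tracking in the projection count. In Mattila's Euclidean setting the gap $l-h$ matches exactly with Euclidean ball radii and the $1$-Lipschitz behavior of $P_V$ gives the sharp $(1+h/l)^m$ factor; in the Banach setting the analogous gap is $l-L^2h$ and the projection $P_V$ carries Lipschitz constant $L^2$ in the Banach metric, so several $L$-factors must be balanced simultaneously against the multiplicative error $(1+\epsilon')$ and the geometric prefactor $(1+L^2h/l)^m$. A cleaner alternative, which may be preferable in practice, is to execute the covering/projection step entirely in the Euclidean metric (where $P_V$ is $1$-Lipschitz and Besicovitch's theorem is directly applicable) and only translate to the Banach formulation at the end via $|\cdot|\sim_L|\cdot|_2$, absorbing all $L$-factors into the parameter $\epsilon'$ fixed at the outset.
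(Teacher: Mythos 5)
Your uniformization step and your proof of the first inequality (the inclusion $P_V^{-1}(B)\cap N(V,h)\subset N(B,L^2h)$ via passing through the Euclidean neighborhood) match the paper exactly. The problem is the second inequality. Your plan --- Vitali $5r$-covering by balls of radius comparable to $l$, followed by a Besicovitch-type multiplicity bound on their projections into $V$ --- is structurally incapable of producing the constant $(1+\epsilon)(1+L^2h/l)^m$. Any argument of the form ``measure $\leq(1+\epsilon')\,|I|\,(2\rho)^m$ with $\rho=(l-L^2h)/5$, then bound $|I|$ by packing projected balls of radius $\rho/5$ into $N(B,l)\cap V$'' loses at least the enlargement ratio to the $m$-th power (a factor like $5^m$ or the Besicovitch overlap constant $C(m)$), and you acknowledge as much when you write $|I|\,(l-L^2h)^m\leq C(L,m)\,\mathcal{H}^m(N(B,l)\cap V)$. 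That constant cannot be ``absorbed into $\epsilon'$'': $\epsilon$ is a fixed number in $(0,1)$, and the sharp form of the bound is genuinely needed downstream --- in the proof of Lemma \ref{MainLemma} this estimate is played off against the lower density bound $(1-\delta)(2Lr)^m$, and the contradiction there evaporates if the right-hand side carries any multiplicative constant bounded away from $1$.

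The paper gets the sharp constant by a different mechanism entirely: a Fubini double-counting argument with cylinders. One defines $C(x,r,l)=\{y:\|P_V(y-x)\|\leq l,\ \|(y-x)-P_V(y-x)\|\leq r\}$, notes the symmetry $y\in C(x,r,l)\Leftrightarrow x\in C(y,r,l)$ and the diameter bound $|C(x,r,l)|\leq 2(r+l)$, and then integrates the indicator $\chi_{x,r,l}(y)$ over $x\in E_0\cap N(B,r)$ and $y\in N(B,l)\cap V$ in both orders. The inner integral in one order is exactly $(2l)^m$ by the isodiametric identity $\mathcal{H}^m(V\cap B(x,l))=(2l)^m$ (Lemma \ref{isod}), and in the other order it is $\mathcal{H}^m(E_0\cap C(y,r,l))\leq(1+\epsilon)(2(r+l))^m$ by the density estimate applied to the cylinder itself as the test set $S$. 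Dividing by $(2l)^m$ gives precisely $(1+\epsilon)(1+r/l)^m$ with $r=L^2h$, with no covering multiplicity lost anywhere. If you want to salvage your write-up, replace the covering step with this exchange-of-integration argument; the rest of your outline can stand.
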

The proof of the Lemma \ref{4.7} follows in the general case by taking a cylinders adapted to the norm we are considering and using the argument of Marstrand \cite{marstrand1961hausdorff} directly. We include the proof here for completeness:

\begin{proof}[Proof of Lemma \ref{4.7}]
Let $R>0$ and $E_0 \subset E$ be defined so that $\mathcal{H}^m(E \cap S) \leq (1+\epsilon)|S|^m$ for $E_0 \cap S \neq \emptyset$ and $|S| < 2R$.
    We first observe that 
        \begin{align*}
            E_0 \cap P^{-1}_V(B) \cap N(V, h) \subset E_0 \cap P^{-1}_V(B) \cap N_2(V, Lh)\subset E_0  \cap N_2(B, Lh)\subset E_0  \cap N(B, L^2h)
        \end{align*}

    For $r<l<R$ and $x \in V$, define the cylinder
        \begin{align*}
            C(x, r, l):= \{ y \in \mathbb{R}^d~:~ \|P_V(y-x)\|\leq l,\, \|(y-x)- P_V(y-x)\|\leq r\} 
        \end{align*}
    Then $|C(x, r, l)| \leq 2(r+l)$, and if $y \in C(x, r, l)$, then $x \in C(y, r, l)$. Let $\chi_{x, r, l}(y)$ denote the indicator function for $C(x, r, l).$ Then
    \begin{align*}
        (2l)^m \mathcal{H}^m(E_0 \cap N(B, r))&= \int_{E_0 \cap N(B, r)}(2l)^m \,\mathcal{H}^m(dx)= \int_{E_0 \cap N(B, r)}\mathcal{H}^m(B(x, l)\cap V) \,\mathcal{H}^m(dx)\\
        &=\int_{E_0 \cap N(B, r)}\int_{B(x, l)}\,\mathcal{H}^m|_{V}(dy) \,\mathcal{H}^m(dx)\\
        &=\int_{E_0 \cap N(B, r)}\int_{N(B, l) \cap V}\chi_{x, r, l}(y)\,\mathcal{H}^m|_{V}(dy) \,\mathcal{H}^m(dx)\\
        &=\int_{N(B, l) \cap V}\int_{E_0 \cap N(B, r)}\chi_{y, r, l}(x)\,\mathcal{H}^m(dx)\,\mathcal{H}^m|_{V}(dy) \\
        &=\int_{N(B, l) \cap V}\mathcal{H}^m(E_0 \cap C(y, r, l))\,\mathcal{H}^m|_{V}(dy) \\
        & \leq (1+\epsilon)(2(r+l))^m \mathcal{H}^m(N(B, l) \cap V).
    \end{align*}
\end{proof}

We now prove Lemma \ref{MainLemma} which follows primarily from Lemmas \ref{4.6} and \ref{4.7}:

\begin{proof}[Proof of Lemma \ref{MainLemma}]
Let $\delta \in (0, \frac{1}{2})$. Using Lemma \ref{uniform}, Lemma \ref{4.6}, and Lemma \ref{4.7}, there exists $E^* \subset E$ such that $H^m(E \setminus E^*)< \eta$ and $R>0$ such that  
\begin{enumerate}
    \item[(i)] If $a \in E^*$, $V \in A(a, d, m)$, $B \subset V$ and $0<L^2h< l< R$, then 
        \begin{align*}
            \mathcal{H}^m[E^* \cap P^{-1}_VB \cap N(V, h)] \leq (1+\delta)(1+L^2h/l)^m \mathcal{H}^m(N(B, l) \cap V).
        \end{align*}
    \item[(ii)] If $a \in E^*$ and $0<r< R$, then there is $V \in A(a, d, m)$ such that 
        \begin{align*}
            E^* \cap(B_2(a, L^2 r) \setminus N_2(V, L^{-2}\delta^2 \mu r))= \emptyset.
        \end{align*}
    \item[(iii)] If $a \in E^*$ and $0<s< LR$, then 
        \begin{align*}
            \frac{\mathcal{H}^m(E^* \cap B(a, s))}{(2s)^m}>1-\delta
        \end{align*}
\end{enumerate}

Now part (1) of Lemma \ref{MainLemma} follows from part (ii) of the preceding list. Part (ii) also implies that 
    \begin{align*}
        E^* \cap B(a, Lr) \subset   E^* \cap B_2(a, L^2r)\subset  N_2(V, \delta^2L^{-2} \mu r)\subset  N(V, \delta^2L^{-1} \mu r).
    \end{align*}
Suppose, by contradiction, that part (2) of Lemma \ref{MainLemma} does not hold for $a \in E^*$, $V \in A(a, d, m)$ and $r \in (0, R)$ satisfying the conditions above.  Then, there is a point $b \in V \cap B_2(a, r)$ such that $B_2(b,  \mu r) \cap E = \emptyset$. This then implies that $b \in V \cap B(a, Lr)$ and $B(b, L^{-1}\mu r) \cap E =\emptyset$.

Set $B = V \cap B(a, Lr) \setminus B(b, L^{-1} \mu r/2)$. Then, since $\delta^2< 1/2$, triangle inequality implies that 
    \begin{align*}
        E^* \cap N(V, \delta^2L^{-1} \mu r) \cap B(a, Lr) \subset E^* \cap N(V, \delta^2L^{-1} \mu r) \cap P^{-1}_V(B)
    \end{align*}

    Now since $E^* \cap(B(a, Lr) \setminus N(V, \delta^2L^{-1} \mu r))= \emptyset$, part (iii) implies
    \begin{align*}
        \mathcal{H}^m(E^* \cap N(V, \delta^2L^{-1} \mu r) \cap P^{-1}_V(B)) \geq  \mathcal{H}^m(E^* \cap B(a, Lr))>(1-\delta)(2Lr)^m
    \end{align*}
    From the definition of $B$ and Lemma \ref{isod}, 
    \begin{align*}
        \mathcal{H}^m(N(B, \delta L^{-1} \mu r) \cap V) \leq  [(1+\delta\mu L^{-2})2L r]^m- [(\tfrac{1}{2}-\delta)2L^{-1}\mu r]^m
    \end{align*}

    Therefore, part (i) implies
    \begin{align*}
        (1-\delta)(2Lr)^m&< \mathcal{H}^m(E^* \cap B(a, Lr)) \\
        &\leq (1+\delta)(1+L^2\delta^2L^{-1}\mu r/\delta L^{-1}\mu r)^m \left([(1+\delta\mu L^{-2})2L r]^m- [(\tfrac{1}{2}-\delta)2L^{-1}\mu r]^m \right)
    \end{align*}
    Now for $\delta$ small enough, we attain a contradiction.
\end{proof}

%%%%%%%%%%%%%%%%%%%%%%%%%%%%%%%%%%%%%%%%%%%%%%%%%%%%%%%%%%%%%%%%%%%%%%%%%%%%%%%%%
%%%%%%%%%%%%%%%%%%%%%%%%%%%%%%%%%%%%%%%%%%%%%%%%%%%%%%%%%%%%%%%%%%%%%%%%%%%%%%%%%

\section{Projections}\label{s:proj}
   First, the crucial lemma on projections of flat, purely unrectifiable sets:
\begin{lemma}[\cite{mattila1999geometry}, Lemma 16.1] \label{BPT}
    If $E$ is a purely $m$-unrectifiable subset of $\mathbb{R}^d$ and $E$ is weakly $m$-linearly approximable, then $H^m[P_V(E)]=0$ for every $V \in G(d,m)$.
\end{lemma}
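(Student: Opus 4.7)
The plan is to argue by contradiction: assume $H^m(P_V(E)) > 0$ for some $V \in G(d,m)$, and extract from this a rectifiable subset of $E$ of positive $H^m$-measure, contradicting pure $m$-unrectifiability. First I would pass to a good set $E_0 \subset E$ of positive measure at which both (i) the weak linear approximability conclusion holds for some fixed small parameter $\eta>0$, with approximating $m$-plane $W_a(r) = a + V_a(r)$ available at each scale $0 < r < R_0$, and (ii) $P_V(a)$ is a density point of $P_V(E) \subset V$ with respect to $H^m$. Condition (i) holds $H^m$-a.e.\ on $E$ by hypothesis; condition (ii) holds on a subset of $E$ of positive $H^m$-measure because $P_V$ is $1$-Lipschitz, so $H^m(P_V(E)) > 0$ forces the pushforward to have many density points, and each such image point lifts back to $E$ with comparable measure.

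The second step is a dichotomy based on the limiting tangent plane. Fix $a \in E_0$; by compactness of $G(d,m)$ there is a subsequence $r_k \to 0$ along which $V_a(r_k)$ converges to some $V_a^\star \in G(d,m)$. The key question is whether the linear map $P_V|_{V_a^\star}\colon V_a^\star \to V$ is an isomorphism. If it is (for a positive-measure collection of $a$), then $P_V|_{V_a(r_k)}$ is uniformly bi-Lipschitz for large $k$. Using the lower density estimate
\begin{align*}
\mathcal{H}^m(E \cap B(x, \eta r_k)) \geq \lambda r_k^m \quad \text{for every } x \in W_a(r_k) \cap B(a, r_k),
\end{align*}
I would select, for each $y$ in an $\eta r_k$-net in $V \cap B_V(P_V(a), c r_k)$, a point of $E$ within $\eta r_k$ of the unique preimage in $W_a(r_k)$; bi-Lipschitz control and a diagonal/compactness selection assemble these into a Lipschitz graph over $V$ close to $E$, and Egorov together with the density bound upgrades this to a positive-$\mathcal{H}^m$-measure Lipschitz-graph subset of $E$, contradicting pure $m$-unrectifiability.

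If instead $P_V|_{V_a^\star}$ is non-injective for $H^m$-a.e.\ $a \in E_0$, then $V_a^\star$ meets $V^\perp$ nontrivially, so $P_V(V_a^\star)$ is an affine subspace of $V$ of dimension at most $m-1$. The second weak-approximability estimate confines $E \cap B(a, r_k)$ to the union of $N(W_a(r_k), \eta r_k)$ and an exceptional set of $\mathcal{H}^m$-measure less than $\eta r_k^m$. Applying the $1$-Lipschitz map $P_V$, the first piece lies inside an $\eta r_k$-neighborhood of a $(\leq m-1)$-dimensional affine subspace intersected with $B_V(P_V(a), r_k)$, with $H^m$-measure $\lesssim \eta r_k^m$, and the exceptional piece contributes at most $\eta r_k^m$ by the Lipschitz bound on measure. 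Thus $H^m(P_V(E \cap B(a, r_k))) = o(r_k^m)$.

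The main obstacle is bridging the local estimate of the non-injective case to a global contradiction with the density of $P_V(E)$ at $P_V(a)$, since $P_V(E) \cap B_V(P_V(a), r_k)$ can receive contributions from $E$-points far from $a$. I would handle this through a Vitali-type argument at a common small scale: cover $E_0$ by balls $B(a_j, r)$ with the non-injective estimate available at each $a_j$, observe that the images $P_V(B(a_j, r))$ nearly cover $P_V(E_0)$, and sum using the Lipschitz bound together with $H^m(E) < \infty$ to deduce $H^m(P_V(E)) \leq C \eta \, H^m(E)$. Since $\eta$ was arbitrary at the outset, this forces $H^m(P_V(E)) = 0$, contradicting the assumption and completing the proof.
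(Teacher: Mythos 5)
The paper does not prove this statement; it is quoted verbatim from Mattila's book (Lemma 16.1 of \cite{mattila1999geometry}), so the comparison below is against that standard proof, which your proposal resembles in outline: contradiction, a dichotomy on whether the approximating planes are transverse to $V^\perp$, rectifiability in the transverse case, and a covering argument in the degenerate case.

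The genuine gap is in your Case A. First, the dichotomy is pinned to a single subsequential limit $V_a^\star$ along $r_k \to 0$; transversality of one subsequential limit does not preclude degeneracy of $V_a(r)$ at intermediate scales, and your Lipschitz-graph construction needs control at the scale $|x-y|$ for \emph{every} pair of nearby points, not just at scales $r_k$. The dichotomy must instead be ``transverse at all sufficiently small scales'' versus ``degenerate along some sequence of scales'' (the latter still suffices for the Vitali argument). Second, and more seriously, the construction itself is not sound as described: a net of points of $E$ lying $\eta r_k$-close to a plane at each scale, followed by a diagonal/compactness selection, produces at best a Lipschitz graph \emph{near} $E$, not a positive-$\mathcal{H}^m$-measure subset \emph{of} $E$ contained in a Lipschitz graph. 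Indeed, purely $m$-unrectifiable sets can be weakly $m$-linearly approximable --- that is precisely why the lemma is nonvacuous --- so closeness of $E$ to planes at all scales can never by itself yield rectifiability. What is needed is a pairwise cone condition $\|P_{V^\perp}(x-y)\| \leq L \|P_V(x-y)\|$ for points $x,y$ in a positive-measure subset of $E$, and establishing it requires showing that a given point $y \in E \cap B(x,r)$ actually lies near the approximating plane at $x$; condition (2) of weak approximability only bounds the \emph{measure} of the exceptional set, so a single $y$ may sit in it at every scale. One must use the lower mass bound of condition (1) near $y$ (with careful bookkeeping of $\eta$ versus $\lambda$) to find a nearby substitute point on the plane --- or, as in Mattila's argument, run the logic in the opposite direction by first invoking the standard fact that a purely unrectifiable set meets the cone $\{z : \|P_{V^\perp}(z-a)\| > s\|P_V(z-a)\|\}$ near a.e.\ of its points at every scale, and then using weak approximability to force the planes to be degenerate. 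Your Case B and the concluding covering argument are essentially correct (modulo using bounded overlap and the lower density bound to control $\sum_j r_j^m$ by $\mathcal{H}^m(E)$), but without repairing Case A the dichotomy does not close.
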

Lemma \ref{MainLemma} now provides a significant subset of $E$ for which Lemma \ref{BPT} is relevant. In conjunction with the next statement, we have all that is necessary to complete the proof of Theorem \ref{mainthm}. The proof follows in the same way that Lemma 5.2 in \cite{mattila1975hausdorff}. However, there are a number of careful estimates that one must check because we are not assuming densities exist with respect to the Euclidean norm. Therefore, a contracted argument appears after the statement.
\begin{lemma}\label{RegProj}
    If $E$ is an $m$-regular subset of $\mathbb{R}^d$, then
        \begin{align*}
            \liminf_{r \rightarrow 0^+} \sup_{V \in A(a,d,m)} \frac{H^m(P_V[E \cap B_2(a,r)])}{(2r)^m}\geq 1
        \end{align*}
    for $\mathcal{H}^m$ a.e. $a \in E$.
\end{lemma}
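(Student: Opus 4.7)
I will follow Mattila's proof of Lemma 5.2 in \cite{mattila1975hausdorff}, while carefully tracking the Banach--Euclidean discrepancy; this is the source of the extra work compared with the Euclidean case. Apply Lemma \ref{MainLemma} with parameters $\eta>0$ and $0<\mu<L^{-2}$ to produce $E^{*}\subset E$ with $H^{m}(E\setminus E^{*})<\eta$ and $R>0$ such that, for every $a\in E^{*}$ and $0<r<R$, there is $V=V(a,r)\in A(a,d,m)$ with $E^{*}\cap B_{2}(a,r)\subset N_{2}(V,\mu r)$ and $V\cap B_{2}(a,r)\subset N_{2}(E,\mu r)$. For $H^{m}$-a.e.\ $a\in E^{*}$, the point $a$ is simultaneously a Euclidean density point of $E^{*}$ in $E$ (so that $H^{m}((E\setminus E^{*})\cap B_{2}(a,2r))=o(r^{m})$) and a point where $\Theta^{m}(E,a)=1$. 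Fix such an $a$ and, for small $r<R$, set $A:=P_{V}(E\cap B_{2}(a,r))$. Since $P_{V}$ is Euclidean $1$-Lipschitz and fixes $a\in V$, $A\subset V\cap B_{2}(a,r)$, and Lemma \ref{isod} gives the immediate upper bound $H^{m}(A)\leq (2r)^{m}$.

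The matching lower bound will use both properties of Lemma \ref{MainLemma}. For $y\in V\cap B_{2}(a,(1-\mu)r)$, property (ii) supplies $x(y)\in E\cap B_{2}(a,r)$ with $\|x(y)-y\|_{2}<\mu r$; if $x(y)\in E^{*}$, property (i) forces $\|P_{V}(x(y))-x(y)\|_{2}<\mu r$, so $y$ lies within $2\mu r$ of $P_{V}(x(y))\in A$, and otherwise $y\in N_{2}(E\setminus E^{*},\mu r)\cap V$. Using that $P_{V}$ is Euclidean $1$-Lipschitz, $P_{V}((E\setminus E^{*})\cap B_{2}(a,2r))\subset V$ has $H^{m}$-measure at most $H^{m}((E\setminus E^{*})\cap B_{2}(a,2r))=o(r^{m})$; a Vitali covering with balls of radius $\geq\mu r/4$ then bounds $H^{m}(N_{2}(E\setminus E^{*},\mu r)\cap V\cap B_{2}(a,r))$ by $5^{m}\cdot o(r^{m})$. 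Consequently the Euclidean $2\mu r$-neighborhood of $A$ inside $V$ covers $V\cap B_{2}(a,(1-\mu)r)$ up to an $H^{m}$-exceptional set of measure $o(r^{m})$.

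To turn this Hausdorff-density statement into an $H^{m}$ lower bound on $A$ itself, apply Lemma \ref{4.7} with base $A\subset V$, tube width $h=L\mu r$, and intermediate scale $l=\mu^{1/2}r$ (admissible for $\mu<L^{-6}$). Using property (i) in the form $E^{*}\cap B_{2}(a,r)\subset P_{V}^{-1}(A)\cap N(V,L\mu r)$, Lemma \ref{4.7} yields $\mathcal{H}^{m}(E^{*}\cap B_{2}(a,r))\leq (1+\delta)(1+L^{3}\mu^{1/2})^{m}\,\mathcal{H}^{m}(N(A,l)\cap V)$, while the Banach density at $a$ applied to the inscribed Banach ball $B(a,r/L)\subset B_{2}(a,r)$ gives the matching lower bound $\mathcal{H}^{m}(E^{*}\cap B_{2}(a,r))\geq (1-\delta)(2r/L)^{m}$. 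The Banach-to-Euclidean reconciliation uses that both $\mathcal{H}^{m}|_{V}$ and $H^{m}|_{V}$ are Haar measures on the affine plane $V$, so they differ by a single constant; hence $H^{m}(A)/(2r)^{m}=\mathcal{H}^{m}(A)/\mathcal{H}^{m}(V\cap B_{2}(a,r))$, and the $L$-factors built into the Banach estimates cancel in this ratio. Combining with the near-covering conclusion and letting first $r\to 0$ (with $\mu,\eta$ fixed), then $\mu\to 0$, and finally $\eta\to 0$, produces the desired $\liminf\geq 1$.

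The main obstacle is precisely this Banach-versus-Euclidean accounting: the density hypothesis is a Banach condition, while the target inequality uses the Euclidean Hausdorff measure of a Euclidean projection. A naive application of norm equivalence would cost a factor $L^{-m}$ that is fatal to the $\liminf\geq 1$ claim. The remedy is to perform all intermediate estimates in $\mathcal{H}^{m}$ (so that Lemma \ref{4.7}, a Banach-norm statement, is the natural tool) and to exploit the Haar structure of $\mathcal{H}^{m}|_{V}$ and $H^{m}|_{V}$ so that their constant ratio cancels in the final comparison; the choice of $l$ as the geometric mean between $L^{2}h$ and $r$ is what makes the correction factor $(1+L^{2}h/l)^{m}=(1+O(\mu^{1/2}))^{m}$ vanish as $\mu\to 0$.
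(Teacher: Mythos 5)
There is a genuine gap at the decisive step of your argument. What you actually establish is that $A=P_V(E\cap B_2(a,r))$ is \emph{$2\mu r$-dense} in $V\cap B_2(a,(1-\mu)r)$ up to an exceptional set: every point $y$ of the disk lies within $2\mu r$ of $A$. But metric density of a subset of the plane $V$ says nothing about its $H^m$-measure --- a finite $\mu r$-net of the disk satisfies every intermediate conclusion you derive and has $H^m(A)=0$. Your attempted repair via Lemma \ref{4.7} does not close this gap, because that lemma only produces a lower bound on $\mathcal{H}^m(N(A,l)\cap V)$, the measure of an $l$-neighborhood of $A$ with $l\geq L^2h\sim\mu r$ (the constraint $L^2h<l$ forbids sending $l\to 0$), and for a $\mu r$-dense set that neighborhood is essentially the whole disk regardless of $\mathcal{H}^m(A)$. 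So at no point do you obtain a lower bound on $H^m(A)$ itself, which is the entire content of the lemma. There is also a secondary quantitative problem: your chain of inequalities carries a factor $(2r/L)^m$ from inscribing the Banach ball $B(a,r/L)$ in $B_2(a,r)$, and the "Haar measure ratio" identity $H^m(A)/(2r)^m=\mathcal{H}^m(A)/\mathcal{H}^m(V\cap B_2(a,r))$ does not cancel it, since $\mathcal{H}^m(V\cap B_2(a,r))$ is only known to lie between $(2r/L)^m$ and $(2Lr)^m$; a direct argument of this shape would at best yield $\liminf\geq L^{-2m}$, not $\geq 1$.

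The paper's proof is structured precisely to avoid this trap: it argues by contradiction, assuming $H^m(F)<\eta(2r_0)^m$ for the \emph{closed} set $F=P_V(E^*\cap B_2(a_0,r_0))$, so that the complement $G=(V\cap B_2(a_0,tr_0))\setminus F$ has measure $\gtrsim(1-\eta)r_0^m$ and can be covered by disjoint balls $B_2(b_q,\rho_q)$ that are genuinely \emph{disjoint from} $F$ (true holes in the projection, not merely points far from a dense set), with $\sum_q\rho_q^m\gtrsim r_0^m$. It then uses the second conclusion of Lemma \ref{MainLemma} at boundary points $e_q\in E^*$ over $\partial B_2(b_q,\rho_q)$ to place $\gtrsim\mu^{-1}$ disjoint balls of radius $\rho_q$ centered at points of $E'$ inside $B_2(a_0,r_0)$, each carrying measure $\gtrsim\rho_q^m$ by lower regularity; summing gives $H^m(E\cap B_2(a_0,r_0))\gtrsim\mu^{-1}r_0^m$, contradicting upper regularity for small $\mu$. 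Note that only the positivity of the deficit matters there, so all $L$-dependent constants are harmless. If you want to salvage your write-up, you should replace the density-of-$A$ step with this hole-counting contradiction; as written, the proof does not establish the statement.
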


\begin{proof}
Suppose, by contradiction that for some $\eta\in (0, 1)$ 
    \begin{align}\label{eucdensity}
            \liminf_{r \rightarrow 0^+} \sup_{V \in A(a,d,m)} \frac{H^m(P_V[E \cap B_2(a,r)])}{(2r)^m}< \eta.
    \end{align}
  for all $a \in E$. Since $\|\cdot\|$ and $\|\cdot\|_2$ are equivalent with respect to $L>1$ and $E$ is $m$-regular, there exists a constant $c=c(m, d, L)$, a number $R_0>0$ and $E' \subset E$ so that for $r \in (0, R_0)$ and $a \in E'$
    \begin{align*}
        \frac{1}{2}c<\frac{H^m(E \cap B_2(a,r))}{(2r)^m}< 2
    \end{align*}
    Let $t:= ((\eta+1)/2)^{1/m}$, $\epsilon>0$, and $\mu \in ( 0, (1-t)/16)$. Lemma \ref{MainLemma}, then implies that there exists an $m$-regular set $E^*\subset E'$ satisfying the conditions of Lemma \ref{MainLemma}. Moreover, we can conclude that there is a point $a_0 \in E^*$, a positive number $r_0 < R_0$ and $V_0 \in A(a_0, d, m)$ such that the folllowing conditions hold
    \begin{align*}
        &\frac{H^m(E \cap B_2(a_0,r_0))}{(2r_0)^m}<2, \hspace{1.6cm} \frac{H^m((E\setminus E^*) \cap B_2(a_0,r_0))}{(2r_0)^m}<\epsilon\\
       &\frac{H^m(P_V[E^* \cap B_2(a_0,r_0)])}{(2r_0)^m}<\eta, \hspace{1cm}
        E^* \cap [B_2(a_0,r_0) \setminus N_2(V, \mu r_0)]=\emptyset\\
        &V_0 \cap B_2(a_0, r_0) \subset N_2(E^*, \mu r_0)
    \end{align*}
Let $F:= P_V[E^* \cap B_2(a_0,r_0)]$ and $G:= (V \cap B_2(a_0, tr_0))\setminus F.$  Therefore, $F$ is closed and $\frac{H^m(F)}{(2r_0)^m}<\eta$ and 
    \begin{align*}
        \frac{H^m(G)}{(2r_0)^m}\geq t^m-\eta= \frac{1-\eta}{2}
    \end{align*}

We can now cover $G$ with with a finite set of balls, $\{B_2(b_q, \rho_q)\}_{q=1}^l$ such that $b_q \in G$, and
    \begin{align*}
        F \cap \partial B_2(b_q, \rho_q) &\neq \emptyset\\
        F \cap B_2(b_q, \rho_q) &= \emptyset\\
        B_2(b_q, 5\rho_q) \cap B_2(b_{q'}, 5 \rho_{q'}) &= \emptyset\\
        \sum_{q=1}^l \rho_q^m&> K_1r^m_0
    \end{align*}
where $K_1=K_1(m, \eta)$. Then $\mu r_0< (1-t)r_0$, and $V_0 \cap B_2(a_0, r_0) \subset N_2(E^*, \mu r_0)$ implies that $\rho_q \leq \mu r_0$. 

Now consider the sets 
    \begin{align*}
        C_q:= P^{-1}_V(B_2(b_q, \rho_q/2)) \cap N_2(V, (1-t)r_0/2)
    \end{align*}
and label them so that for $q=1, ...., k$, $C_q \cap E' =\emptyset$ and for $q=k+1, ..., l$, there exists $c_q \in C_q \cap E'$. If $q \in \{ k+1, ..., l\}$ and $x \in E \cap B_2(c_q, \rho_q/4)$, then $ x \in B_2(a_0, r_0)$, $P_V(x) \in B_2(b_q, \rho_q)$ and thus $P_V(x) \not\in F$ and $x \not\in E^*$. Thus, 
    \begin{align*}
        \bigcup_{q =k+1}^l E \cap B_2(c_q, \rho_q/4) \subset (E\setminus E^*) \cap B_2(a_0, r_0)
    \end{align*}
Now, since the $B_2(c_q, \rho_q/2)$ are disjoint
    \begin{align*}
        \tfrac{1}{2}4^{-m} \sum_{q=k+1}^l \rho^m_q < \epsilon r^m_0.
    \end{align*}
Then taking $\epsilon< \tfrac{1}{4}4^{-m}$, we get
    \begin{align*}
        \sum_{q=1}^k \rho^m_q>\tfrac{K_1}{2}r^m_0. 
    \end{align*}
For $q \in \{1, ..., k\}$, choose points $e_q \in P_V^{-1}(\partial B_2(b_q, \rho_q)) \cap N_2(V, \mu r_0) \cap E^*$ and $V_q \in A(e_q, d, m)$ such that 
    \begin{align}\label{flatnbhd}
        A_q:= B_2(e_q, \mu^{-1}(1-t)\rho_q/8) \cap V_q \subset N_2(E', (1-t)\rho_q/8)
    \end{align}

Next, $\mu^{-1}\rho_q \leq r_0$ and $(1-t)/8$ is small enough so that $b_q \not\in P_V(A_q)$. Moreover, \eqref{flatnbhd} implies that there exist $K_2=K_2(t)$ and a line segment in $A_q$ that can be covered by a pairwise disjoint family of balls of radius $\rho_q$, centered at points in $E'$, $\{B_2(x^q_j, \rho_q)\}_{j=1}^s$, such that $B_2(x^q_j, \rho_q) \subset B_2(a_0, r_0)$ and $s \geq K_2 \mu^{-1}$. Since $B_2(x^q_j, \rho_q) \subset B_2(a_0, r_0)$, we have
    \begin{align*}
        H^m(E \cap B_2(a_0, r_0)) \geq \sum_{q=1}^k \sum_{j=1}^s H^m( E \cap  B_2(x^q_j, \rho_q))  \geq  c_0\mu^{-1}r^m_0
    \end{align*}
for some $c_0>0$. Therefore, 
    \begin{align*}
        c_0\mu^{-1} \leq \frac{H^m(E \cap B_2(a_0,r_0))}{(2r_0)^m}<2
    \end{align*}
    which provides a contradiction for $\mu$ small enough.
\end{proof}

%%%%%%%%%%%%%%%%%%%%%%%%%%%%%%%%%%%%%%%%%%%%%%%%%%%%%%%%%%%%%%%%%%%%%%%%%%%%%%%%%
%%%%%%%%%%%%%%%%%%%%%%%%%%%%%%%%%%%%%%%%%%%%%%%%%%%%%%%%%%%%%%%%%%%%%%%%%%%%%%%%%

\section{Conclusion}\label{s:argument}

We can finally close the argument for Theorem \ref{mainthm}.  The primary results from the preceding work necessary will be Lemma \ref{MainLemma}, and Theorems \ref{BPT} and \ref{RegProj}.

\begin{proof}[Proof of Theorem \ref{mainthm}]
Let $(\mathbb{R}^d,\|\cdot\|)$ be a $d$-dimensional Banach space.  We begin with the forward direction.  This follows from standard density arguments in combination with the fact that rectifiable sets are linearly approximable.

For the reverse direction, we will combine Lemma \ref{BPT} and Lemma \ref{MainLemma} to contradict Lemma \ref{RegProj}.  Let $0<\eta$, $0<\lambda<1$ and $E \subset \mathbb{R}^d$ be an $m$-regular, purely unrectifiable set.  Furthermore, let $E^*$ and $R$ be given by Lemma \ref{MainLemma}.  Then the conclusions of Lemma \ref{MainLemma} imply that $E^*$ is weakly $m$-linearly approximable with respect to the Euclidean norm. Also, since $E$ is purely unrectifiable, $E^*$ is also purely unrectifiable.  Using the conclusion of Lemma \ref{BPT} we see that $\mathcal{H}^m[P_V(E^*)]=0$ for every $V \in G(d,m)$.  However, since $E^*$ is $m$-regular, Lemma \ref{RegProj} contradicts the previous assertion.
\end{proof}

%%%%%%%%%%%%%%%%%%%%%%%%%%%%%%%%%%%%%%%%%%%%%%%%%%%%%%%%%%%%%%%%%%%%%%%%%%%%%%%%%
%%%%%%%%%%%%%%%%%%%%%%%%%%%%%%%%%%%%%%%%%%%%%%%%%%%%%%%%%%%%%%%%%%%%%%%%%%%%%%%%%

\bibliographystyle{plain}
\bibliography{references}

\end{document}